\numberwithin{equation}{section}
\theoremstyle{plain}
\newtheorem{thm}{\protect\theoremname}[section]
\theoremstyle{definition}
\newtheorem{defn}[thm]{\protect\definitionname}
\theoremstyle{remark}
\newtheorem{rem}[thm]{\protect\remarkname}
\theoremstyle{definition}
\newtheorem{example}[thm]{\protect\examplename}
\theoremstyle{plain}
\newtheorem{prop}[thm]{\protect\propositionname}
\theoremstyle{plain}
\newtheorem{cor}[thm]{\protect\corollaryname}
\definecolor{ForestGreen}{HTML}{005F50} 
\setlist[enumerate,1]{label=(\roman*), ref=(\roman*)}
\DeclareMathOperator{\E}{{\mathds E}}
\DeclareMathOperator{\sign}{{sign}}
\DeclareMathOperator{\VaR}{\mathsf{V@R}}	
\DeclareMathOperator{\AVaR}{\mathsf{AV@R}}	
\DeclareMathOperator{\supp}{{supp}}
\DeclareMathOperator*{\essinf}{ess\,inf}
\DeclareMathOperator*{\esssup}{ess\,sup}
\DeclareMathOperator*{\argmin}{arg\,min}
\newcommand{\one}{{\mathds 1}} 		
\providecommand{\corollaryname}{Corollary}
\providecommand{\definitionname}{Definition}
\providecommand{\examplename}{Example}
\providecommand{\propositionname}{Proposition}
\providecommand{\remarkname}{Remark}
\providecommand{\theoremname}{Theorem}
\begin{document}
\title{\textbf{Expectiles In Risk Averse Stochastic Programming and Dynamic~Optimization}
}
\author{Rajmadan Lakshmanan\thanks{University of Technology, Chemnitz, Faculty of mathematics. 90126
Chemnitz, Germany}  \and  Alois Pichler\footnotemark[1]\, \thanks{\orcidlink{0000-0001-8876-2429}~\protect\href{https://orcid.org/0000-0001-8876-2429}{orcid.org/0000-0001-8876-2429}.
Contact: \protect\href{mailto:alois.pichler@math.tu-chemnitz.de}{alois.pichler@math.tu-chemnitz.de}\protect \\
DFG, German Research Foundation \textendash{} Project-ID 416228727
\textendash{} SFB~1410}}
\maketitle
\begin{abstract}
This paper features expectiles in dynamic and stochastic optimization.
Expectiles are a family of risk functionals characterized as minimizers of optimization problems.
For this reason, they enjoy various unique stability properties, which can
be exploited in risk averse management, in stochastic optimization
and in optimal control.

The paper provides tight relates of expectiles to other risk functionals
and addresses their properties in regression. Further, we extend expectiles
to a dynamic framework. As such, they allow incorporating a risk averse
aspect in continuous-time dynamic optimization and a risk averse variant of the Hamilton\textendash Jacobi\textendash Bellman equations.

\medskip{}

\noindent \textbf{Keywords:} Expectiles {\tiny\textbullet} multistage stochastic optimization {\tiny\textbullet}  dynamic optimization {\tiny\textbullet}  stochastic processes

\noindent \textbf{Classification:} 90C08, 90C15, 60G07
\end{abstract}

\section{Introduction\label{sec:Introduction}}

Classical dynamic programming problems involve the expectation in
the objective. The expectation is a risk neutral assessment of random
outcomes. In many situations, specifically in economic environments,
a risk averse assessment or risk management is much more favorable
and desirable. For this reason there have been attempts to develop
risk averse dynamic programming principles and risk averse Hamilton\textendash Jacobi\textendash Bellman
equations. 

Non-linear expectations ($g$\nobreakdash-expectations, cf.\ \citet{Peng,Peng2002,Peng1992,Peng2004,Peng2010})
have been considered, e.g., to incorporate the aspect of risk to dynamic
equations. A seemingly simpler approach involves risk measures (or
risk functionals) instead of non-linear expectations, as risk measures
are able to assess the risk associated with a random outcome (cf.\ \citet{RuszczynskiHJB,Ruszczynski2020}).
By construction, risk measures are defined on random variables. For
dynamic programming, they need to be extended to stochastic processes.
The increments of stochastic processes are random variables so that
composing risk measures over time and accumulating the corresponding
risk is a promising approach to extend risk functionals from random
variables to stochastic processes. 

\medskip{}
Specifically, this paper addresses expectiles in stochastic and dynamic
optimization. Expectiles constitute a family of risk measure with
unique properties. We demonstrate how they can be employed to incorporate
risk aversion in dynamic programming and to develop risk averse Hamilton\textendash Jacobi\textendash Bellman
equations.

\citet{ContRobustness} point out the importance of estimating risk
measures in a robust way. In this context, \citet{Gneiting} proves
that the Average Value-at-Risk, the most important risk measure in
theory and practice, is not elicitable, that is, it is not possible
to describe the risk measure as minimizer. More generally, \citet{Ziegel2014}
proves that the only elicitable spectral risk measure is the (trivial)
expectation. \citet{Bellini2014} finally provide a proof that only
expectiles constitute elicitable risk measures. 

Expectiles have been introduced earlier in \citet{NeweyPowell} as
\begin{equation}
e_{\alpha}(X)\coloneqq\argmin_{x\in\mathbb{R}}\E\ell_{\alpha}(X-x),\label{eq:Argmin}
\end{equation}
where $X$ is a $\mathbb{R}$\nobreakdash-valued random variable,
$\alpha\in(0,1)$ and the scoring function (loss function) is\footnote{$x_{+}\coloneqq\max(0,x)$}
\begin{equation}
\ell_{\alpha}(x)\coloneqq\alpha\cdot x_{+}^{2}+(1-\alpha)(-x)_{+}^{2}=\begin{cases}
\quad\alpha\cdot x^{2} & \text{if }x\ge0,\\
(1-\alpha)\,x^{2} & \text{if }x\le0.
\end{cases}\label{eq:ell}
\end{equation}
The characterization as a minimizer in the definition~\eqref{eq:Argmin}
applies for $X\in L^{2}$. The first order condition (cf.~\eqref{eq:ExpectileFO}
below) is an equivalent characterization of the expectile, which applies
\textendash{} more generally \textendash{} for $X\in L^{1}\supset L^{2}$. 
\begin{defn}[Expectiles, cf.~\citep{NeweyPowell}]
\label{def:Expectile}For $X\in L^{1}$ and a risk level $\alpha\in[0,1]$,
the expectiles of a random variable~$X$ is the unique solution of
the equation 
\begin{equation}
\alpha\,\E(X-x)_{+}=(1-\alpha)\E(x-X)_{+},\label{eq:ExpectileFO}
\end{equation}
where $x\in\mathbb{R}$.
\end{defn}

\begin{rem}
In an alternative way, replacing the objective in~\eqref{eq:Argmin}
by $\E\bigl(\ell_{\alpha}(X-x)-\ell_{\alpha}(X-x_{0})\bigr)$ for
some fixed $x_{0}\in\mathbb{R}$ extends the definition to $X\in L^{1}$
as well, so that expectiles are well-defined for $X\in L^{1}$, even
as minimizers.
\end{rem}

For $\alpha=\nicefrac{1}{2},$ the expectile is the expectation, $e_{\nicefrac{1}{2}}(X)=\E X$.
It follows from symmetry of the loss function $\ell_{\alpha}$ (i.e.,
$\ell_{\alpha}(x)=\ell_{1-\alpha}(-x)$) that 
\begin{equation}
e_{\alpha}(X)=-e_{1-\alpha}(-X),\label{eq:Symmetry}
\end{equation}
so that the expectile involves both tails, the lower and the upper
tail of the distribution of the random variable~$X$. For $X\in L^{\infty}$,
the expectile approaches the essential supremum for increasing risk
level, $e_{\alpha}(X)\to\esssup X$ as~$\alpha\to1$.\footnote{\label{fn:esssup}The essential supremum of $X$ is the smallest \emph{number}
$c\in\mathbb{R}$ so that $X\le c$ a.s.} More generally, we have the monotone behavior  
\begin{equation}
\E X\le e_{\alpha}(X)\le e_{\alpha^{\prime}}(X)\le\esssup X\label{eq:5}
\end{equation}
for $\nicefrac{1}{2}\le\alpha\le\alpha^{\prime}\le1$.

\paragraph{Outline of the paper.}

In the following Section~\ref{sec:Elicitable} we elaborate that
expectiles constitute a risk measure, and we provide tight relations
to other risk measures. Next, we introduce conditional risk functionals
in Section~\ref{sec:Conditional}. These are important for risk management
in discrete and in continuous time. In continuous time (Section~\ref{sec:Processes}),
we consider the risk-averse generator, which turns out to be a non-linear
differential operator. We finally employ expectiles for dynamic optimization
problems in Section~\ref{sec:Control} and conclude in Section~\ref{sec:Summary}.

\section{\label{sec:Elicitable}Elicitable risk measures}

The expectile $e_{\alpha}(\cdot)$ is a risk measure as introduced
in \citet{Artzner1999}. That is, the mapping $X\mapsto e_{\alpha}(X)$,
provided that $\alpha\ge\nicefrac{1}{2}$, satisfies the following
four axioms formulated for (convex) risk measures $\mathcal{R}\colon\mathcal{Y}\to\mathbb{R}$,
where~$\mathcal{Y}$ is an appropriate linear space of $\mathbb{R}$\nobreakdash-valued
random variables (for example $\mathcal{Y}=L^{1}(P)$) on the probability
space $(\Omega,\mathcal{F},P)$:
\begin{enumerate}[noitemsep]
\item \label{enu:1} $\mathcal{R}(X)\le\mathcal{R}(Y)$ for all $X\le Y$
almost everywhere, 
\item \label{enu:2} $\mathcal{R}(X+Y)\le\mathcal{R}(X)+\mathcal{R}(Y)$
for all $X$, $Y\in\mathcal{Y}$,
\item \label{enu:3} $\mathcal{R}(\lambda\,X)=\lambda\,\mathcal{R}(X)$
for all $\lambda>0$, and
\item \label{enu:4} $\mathcal{R}(c+X)=c+\mathcal{R}(X)$ for all $c\in\mathbb{R}$. 
\end{enumerate}
The expectile is a risk functional satisfying the Axioms~\ref{enu:1}\textendash \ref{enu:4}
above (Appendix~\ref{sec:Appendix} presents a brief proof for the
subadditivity~\ref{enu:2}, while the other assertions are evident).
Further, the expectile $e_{\alpha}(\cdot)$ is the only risk measure
which can be expressed as a minimizer \textendash ~as in~\eqref{eq:Argmin}~\textendash{}
in addition. We will elaborate below that the expectile is not a spectral
risk measure. The natural space (cf.\ \citet{Pichler2013a}) of expectiles
is $\mathcal{Y}=L^{1}$, cf.\ also the discussion in Section~\ref{sec:Introduction}
above. In what follows \textendash ~unless stated differently~\textendash{}
we will always assume that $\mathcal{Y}=L^{1}$.

Explicit expressions for the expectiles are available only in exceptional
cases. For the uniform distribution in the interval $[0,1]$, $U\sim\mathcal{U}[0,1]$,
e.g., the expectile is $e_{\alpha}(U)=\frac{\alpha-\sqrt{\alpha(1-\alpha)}}{2\alpha-1}$. 

To extend expectiles to a risk measure in continuous time employing
the Wiener process (Brownian motion), we shall frequently need the
expectile of the normal distribution, for which at least the following
series expansion is available.
\begin{example}
An explicit expression for the expectile of normally distributed random
variables, $X\sim\mathcal{N}(\mu,\sigma^{2})$, is not available.
It holds that 
\begin{equation}
e_{\alpha}(X)=\mu+\sigma\sqrt{\frac{8}{\pi}}\Bigl(\alpha-\frac{1}{2}\Bigr)+\sigma\frac{8\sqrt{2}}{\sqrt{\pi}^{3}}\Bigl(\alpha-\frac{1}{2}\Bigr)^{3}+\mathcal{O}\Bigl(\alpha-\frac{1}{2}\Bigr)^{5}.\label{eq:13}
\end{equation}
\end{example}

\begin{proof}
The general assertion derives from the standard normal distribution.
Denoting the density of the standard normal distribution by $\varphi(t)=\frac{1}{\sqrt{2\pi}}e^{-t^{2}/2}$
and by $\Phi(x)=\int_{-\infty}^{x}\varphi(t)\,dt$ its antiderivative,
it holds that 
\begin{align*}
\E(X-t)_{+} & =\int_{t}^{\infty}(x-t)\varphi(x)\,dx=\varphi(t)-t\bigl(1-\Phi(t)\bigr)\\
\shortintertext{and}\E(t-X)_{+} & =\int_{-\infty}^{t}(t-x)\varphi(x)\,dx=t\,\Phi(t)+\varphi(t),
\end{align*}
which follows readily by employing the identity $\varphi'(x)=-x\,\varphi(x)$.
Based on~\eqref{eq:ExpectileFO} define now 
\begin{align}
f(\alpha,e) & \coloneqq\alpha\cdot\bigl(\varphi(e)-e\bigl(1-\Phi(e)\bigr)\bigr)-(1-\alpha)\cdot\bigl(e\,\Phi(e)+\varphi(e)\bigr)\label{eq:18}\\
 & =(2\alpha-1)\bigl(\varphi(e)+e\,\Phi(e)\bigr)-\alpha\,e\nonumber 
\end{align}
so that the expectile $e_{\alpha}$ of the normally distributed random
variable $X$ satisfies $f(\alpha,e_{\alpha})=0$ for every $\alpha\in(0,1)$.
We now apply the implicit function theorem.

As $e_{\nicefrac{1}{2}}(X)=\E X=0$ for the normal distribution it
holds that $f(\nicefrac{1}{2},e_{\nicefrac{1}{2}})=0$. Further, the
partial derivatives of $f$ at $(\alpha,e)$ are $f_{\alpha}(\nicefrac{1}{2},0)=\sqrt{\frac{2}{\pi}}$
and $f_{e}(\alpha,e)=-\frac{1}{2}$ so that the first term in assertion~\eqref{eq:13}
follows with the implicit function theorem. The coefficient for the
next term $\bigl(\alpha-\nicefrac{1}{2}\bigr)^{2}$ is zero, because
the function~\eqref{eq:18} is odd with respect to the center $\nicefrac{1}{2}$,
as the normal distribution is symmetric, cf.~\eqref{eq:Symmetry}.
The remaining coefficient is found by differentiating the function~\eqref{eq:18}
further. We omit the rather technical computations here, as our further
results build on the first two terms only.
\end{proof}
\begin{example}
For a log-normal random variable $X$ with $\log X\sim\mathcal{N}(\mu,\sigma^{2})$,
the expectiles are 
\[
e_{\alpha}(X)=e^{\mu+\frac{\sigma^{2}}{2}}+\left(e^{\sigma^{2}}-1\right)e^{2\mu+\sigma^{2}}\,\Bigl(\alpha-\frac{1}{2}\Bigr)\,4\sqrt{e}\bigl(2\Phi(\nicefrac{1}{2})-1\bigr)+\mathcal{O}\Bigl(\alpha-\frac{1}{2}\Bigr)^{2}.
\]
\end{example}

\begin{proof}
As above, the proof again relies on explicitly available expressions
\begin{align*}
\E(X-t)_{+} & =\int_{\log t}^{\infty}\bigl(e^{x}-t\bigr)\varphi(x)\,dx=\sqrt{e}\,\Phi(1-\log t)-t\,\Phi(-\log t)\\
\shortintertext{and}\E(t-X)_{+} & =\int_{-\infty}^{\log t}\bigl(t-e^{x}\bigr)\varphi(x)\,dx=\sqrt{e}\,\Phi(1-\log t)-\sqrt{e}+t\,\Phi(\log t).
\end{align*}
The statement follows again by the implicit function theorem.
\end{proof}

\subsection{Tight comparison with important risk measures}

In what follows, we shall compare expectiles with important risk measures
and give the tightest-possible estimates and the smallest spectral
risk measure enveloping the expectiles.

The Average Value-at-Risk is the smallest convex envelope of the Value-at-Risk
(cf.\ \citet{Follmer2004}). The Average Value-at-Risk can be stated
in the equivalent forms (cf.\ \citet{Pflug2000})
\begin{align}
\AVaR_{\alpha}(X) & \coloneqq\frac{1}{1-\alpha}\int_{\alpha}^{1}F_{X}^{-1}(\alpha)\mathrm{d}\alpha\nonumber \\
 & =\min\left\{ q+\frac{1}{1-\alpha}\E(X-q)_{+}\colon q\in\mathbb{R}\right\} ,\label{eq:AVaR}\\
\shortintertext{\text{where}}\VaR_{\alpha}(X) & \coloneqq F_{X}^{-1}(\alpha)\coloneqq\inf\bigl\{ x\colon P(X\le x)\ge\alpha\bigr\}\label{eq:VaR}
\end{align}
is the Value-at-risk. 

The Average Value-at-risk is the fundamental building block in the
Kusoka representation (cf.\ \citet{Kusuoka}) and the most important
risk functional in actuarial practice. Notice as well that the Average
Value-at-Risk is the \emph{minimum objective} of an optimization problem
(problem~\eqref{eq:AVaR}), while the expectile in~\eqref{eq:Argmin}
is the \emph{minimizer} of an optimization problem.
\begin{rem}[Quantiles]
Similarly to the expectile, the Value-at-Risk defined in~\eqref{eq:VaR}
is a minimizer of an optimization problem, specifically the problem
\begin{align}
\min_{q\in\mathbb{R}}\E\tilde{\ell}_{\alpha} & (X-q)\nonumber \\
\shortintertext{\text{with scoring function}}\tilde{\ell}_{\alpha}(x) & \coloneqq\begin{cases}
-(1-\alpha)\,x & \text{if }x\le0,\\
\quad\alpha\cdot x & \text{if }x\ge0
\end{cases}\ =\Big(\alpha-\frac{1}{2}\Big)x+\frac{1}{2}\left|x\right|,
\end{align}
well-known from quantile regression. Indeed, the first order condition
is $0=\frac{\partial}{\partial q}\E\ell_{\alpha}(X-q)=\alpha\E\one_{\left\{ X>q\right\} }-(1-\alpha)\E\one_{\left\{ X\le q\right\} }=\alpha-P(X\le q)$
and hence the assertion. However, by violating~\ref{enu:2} above,
the Value-at-Risk is \emph{not} a convex risk functional.
\end{rem}

\begin{defn}[Spectral risk measure, cf.\ \citet{Acerbi2002,Acerbi2002a}]
Let $\sigma\colon[0,1)\to\mathbb{R}_{\ge0}$ be a non-negative, non-decreasing
function with $\int_{0}^{1}\sigma(u)\,\mathrm{d}u=1$. Then
\[
\mathcal{R}_{\sigma}(X)=\int_{0}^{1}F_{X}^{-1}(\alpha)\sigma(\alpha)\,\mathrm{d}\alpha,\qquad X\in\mathcal{Y},
\]
is a risk measure. $\mathcal{R}_{\sigma}$ is called a the \emph{spectral
risk measure} and the function~$\sigma$ is called the \emph{spectrum}
of~$\mathcal{R}_{\sigma}$. 
\end{defn}

The expectiles are not a spectral risk measure themselves.
But for every expectile, there is a smallest spectral risk measure. 
\begin{prop}[Enveloping risk measure]
\label{prop:Risk}If $\mathcal{R}_{\sigma}(X)$ is any spectral risk
measure with 
\begin{equation}
e_{\alpha}(X)\le\mathcal{R}_{\sigma}(X)\label{eq:1-1}
\end{equation}
for every random variable~$X\in\mathcal{Y}$, then $e_{\alpha}(X)\le s_{\alpha}(X)\le\mathcal{R}_{\sigma}(X)$
for all $X$, where 
\begin{equation}
s_{\alpha}(X)\coloneqq\int_{0}^{1}F_{X}^{-1}(u)\frac{\alpha(1-\alpha)}{\big(\alpha-u(2\alpha-1)\big)^{2}}\,\mathrm{d}u;\label{eq:8}
\end{equation}
that is, $s_{\alpha}$ is the smallest spectral risk measure larger
than~$e_{\alpha}$.

\begin{figure}
\centering{}\includegraphics[width=0.7\textwidth]{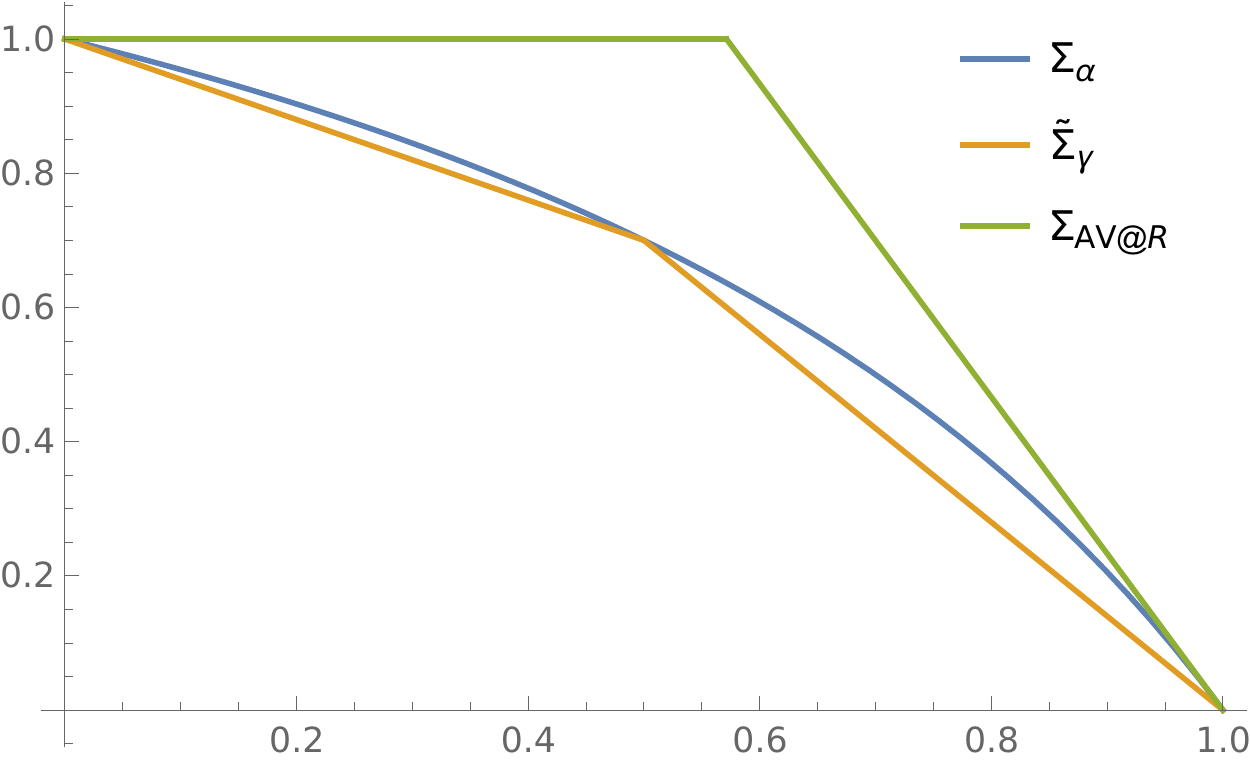}\caption{The function $\Sigma_{\alpha}$ and $\tilde{\Sigma}_{\gamma}$, exemplified
for $\alpha=70\,\%$ and $\gamma=60\,\%$\label{fig:S}}
\end{figure}
\end{prop}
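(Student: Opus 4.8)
Denote by $\Sigma_{\alpha}(u)\coloneqq\frac{\alpha(1-\alpha)}{\bigl(\alpha-u(2\alpha-1)\bigr)^{2}}$ the integrand (spectrum) appearing in~\eqref{eq:8}, so that $s_{\alpha}=\mathcal{R}_{\Sigma_{\alpha}}$. The plan is in three steps. First I would verify that $\Sigma_{\alpha}$ is a genuine spectrum, i.e.\ that $s_{\alpha}$ is indeed a spectral risk measure: for $\alpha\in[\nicefrac{1}{2},1)$ the term $\alpha-u(2\alpha-1)$ runs through $[1-\alpha,\alpha]\subset(0,\infty)$ as $u$ runs through $[0,1]$, hence $\Sigma_{\alpha}\ge0$, and $\Sigma_{\alpha}$ is non-decreasing because its denominator is positive and decreasing in~$u$. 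The substitution $v=\alpha-u(2\alpha-1)$ gives $\int_{0}^{1}\Sigma_{\alpha}(u)\,\mathrm{d}u=\frac{\alpha(1-\alpha)}{2\alpha-1}\bigl(\frac{1}{1-\alpha}-\frac{1}{\alpha}\bigr)=1$ and, more generally, the primitive $\int_{0}^{u}\Sigma_{\alpha}(t)\,\mathrm{d}t=\frac{u(1-\alpha)}{\alpha-u(2\alpha-1)}$, which I would use in both remaining steps.

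Second, I would prove minimality by testing the hypothesis~\eqref{eq:1-1} on indicators. For an event $A$ with $P(\Omega\setminus A)=p$ the first order condition~\eqref{eq:ExpectileFO} yields $e_{\alpha}(\one_{A})=\frac{\alpha(1-p)}{\alpha-p(2\alpha-1)}$, whereas $\mathcal{R}_{\sigma}(\one_{A})=\int_{p}^{1}\sigma(u)\,\mathrm{d}u=1-\int_{0}^{p}\sigma$. By the primitive above, $e_{\alpha}(\one_{A})=1-\int_{0}^{p}\Sigma_{\alpha}$, so the assumption~\eqref{eq:1-1}, specialized to $X=\one_{A}$, reads $\int_{0}^{p}\sigma\le\int_{0}^{p}\Sigma_{\alpha}$ for every $p\in[0,1]$. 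As both spectra integrate to~$1$, the function $G(u)\coloneqq\int_{0}^{u}\bigl(\sigma(t)-\Sigma_{\alpha}(t)\bigr)\,\mathrm{d}t$ satisfies $G\le0$ and $G(0)=G(1)=0$. Then for any $X\in L^{1}$ an integration by parts in the Stieltjes sense gives $\mathcal{R}_{\sigma}(X)-s_{\alpha}(X)=\int_{0}^{1}F_{X}^{-1}(u)\bigl(\sigma(u)-\Sigma_{\alpha}(u)\bigr)\,\mathrm{d}u=-\int_{(0,1)}G(u)\,\mathrm{d}F_{X}^{-1}(u)\ge0$, because $F_{X}^{-1}$ is non-decreasing and the boundary terms vanish ($G(0)=G(1)=0$, while $(1-u)|F_{X}^{-1}(u)|\to0$ as $F_{X}^{-1}\in L^{1}$, and symmetrically at~$0$). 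Hence $s_{\alpha}\le\mathcal{R}_{\sigma}$.

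Third, I would show the left inequality $e_{\alpha}(X)\le s_{\alpha}(X)$. The key is that the expectile admits a representation $e_{\alpha}(X)=\int_{0}^{1}F_{X}^{-1}(u)\,\rho_{X}(u)\,\mathrm{d}u$ with a (random-variable-dependent) step spectrum $\rho_{X}$: writing $e\coloneqq e_{\alpha}(X)$, $p\coloneqq P(X<e)$ and $D\coloneqq\alpha-p(2\alpha-1)$, one checks directly from~\eqref{eq:ExpectileFO} that $\rho_{X}(u)=\frac{1-\alpha}{D}$ for $u<p$ and $\rho_{X}(u)=\frac{\alpha}{D}$ for $u>p$ does the job (with an intermediate constant value on $\bigl(p,\,p+P(X=e)\bigr)$, fixed by $\int_{0}^{1}\rho_{X}=1$, should $X$ carry an atom at~$e$); equivalently, $\rho_{X}=F_{Z^{*}}^{-1}$ where $Z^{*}$ is the comonotone optimal dual variable in the robust representation of the expectile, cf.\ \citet{Bellini2014}. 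The heart of the argument is then the pointwise bound $\int_{0}^{u}\rho_{X}\ge\int_{0}^{u}\Sigma_{\alpha}$ for all $u\in[0,1]$, with equality at $u=1$: for $u\le p$ it is just $\alpha-u(2\alpha-1)\ge D$, and for $u\ge p$ it reduces, after clearing the positive denominators, to the factorization $\alpha(2\alpha-1)(1-u)(u-p)\ge0$. Therefore $H(u)\coloneqq\int_{0}^{u}\bigl(\Sigma_{\alpha}(t)-\rho_{X}(t)\bigr)\,\mathrm{d}t\le0$ with $H(0)=H(1)=0$, and the same integration-by-parts identity as above gives $s_{\alpha}(X)-e_{\alpha}(X)=-\int_{(0,1)}H(u)\,\mathrm{d}F_{X}^{-1}(u)\ge0$.

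The one genuinely computational point — and the main obstacle — is verifying the primitive inequality $\int_{0}^{u}\rho_{X}\ge\int_{0}^{u}\Sigma_{\alpha}$, i.e.\ producing the factorization $\alpha(2\alpha-1)(1-u)(u-p)\ge0$, together with the bookkeeping for the case when $X$ has an atom at $e_{\alpha}(X)$, which replaces the two-step $\rho_{X}$ by a three-step one. Everything else is soft: identifying $\rho_{X}$ from~\eqref{eq:ExpectileFO}, the two Stieltjes integrations by parts, and the observation that $\Sigma_{\alpha},\sigma$ are bounded so that $\mathcal{R}_{\sigma}$ and $s_{\alpha}$ are finite on $L^{1}$ and all the integrals above converge.
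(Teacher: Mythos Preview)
Your proof is correct and largely parallel to the paper's, but with one genuinely different ingredient. For the inequality $e_{\alpha}\le s_{\alpha}$, both you and the paper compare cumulative (or survival) spectra and finish with a Stieltjes integration by parts against the non-decreasing $F_{X}^{-1}$. The paper packages this via the Kusuoka representation $e_{\alpha}(X)=\max_{\gamma}\bigl(\gamma\,\E X+(1-\gamma)\,\AVaR_{\tau(\gamma)}(X)\bigr)$ from \citet{Bellini2014}, showing that each piecewise linear survival function $\Sigma_{\gamma}$ lies below the concave $\Sigma(u)=\frac{\alpha(1-u)}{\alpha-u(2\alpha-1)}$; your step spectrum $\rho_{X}$ is exactly the optimal dual in that Kusuoka max (the correspondence is $\gamma=\frac{1-\alpha}{D}$), so the two arguments are the same computation in different parametrizations. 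Your explicit factorization $\alpha(2\alpha-1)(1-u)(u-p)\ge0$ is a nice bonus, and the atom case is actually simpler than you fear: the two-step $\rho_{X}$ with jump at $p=P(X<e)$ already gives $\int F_{X}^{-1}\rho_{X}=e$ and $\int\rho_{X}=1$, so the three-step variant is unnecessary.

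Where you genuinely depart from the paper is the minimality step $s_{\alpha}\le\mathcal{R}_{\sigma}$. The paper argues that $\Sigma$ is the upper envelope of the $\Sigma_{\gamma}$ and infers minimality from that; your route via indicator random variables is more elementary and more transparent. In effect you discover (without stating it) that $e_{\alpha}(\one_{A})=s_{\alpha}(\one_{A})$ for every event~$A$, which immediately forces $\int_{0}^{p}\sigma\le\int_{0}^{p}\Sigma_{\alpha}$ for all~$p$ from the hypothesis, and then the same integration-by-parts device closes the argument. This is a cleaner and more self-contained proof of minimality than the paper's, as it does not need to invoke the full Kusuoka representation.
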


\begin{proof}
Above all, $s_{\alpha}(\cdot)$ is a spectral risk functional, as
$u\mapsto\frac{\alpha(1-\alpha)}{\bigl(\alpha-u(2\alpha-1)\big)^{2}}$
is a non-negative, increasing function and $\int_{0}^{1}\frac{\alpha(1-\alpha)}{\big(\alpha-u(2\alpha-1)\big)^{2}}\,\mathrm{d}u=1$.

\citet[Proposition~9]{Bellini2014} provide the Kusuoka representation
\begin{equation}
e_{\alpha}(X)=\max_{\gamma\in[\nicefrac{1}{\beta},1]}\gamma\E X+(1-\gamma)\AVaR_{\frac{\beta-\frac{1}{\gamma}}{\beta-1}}(X)\label{eq:Kusuoka}
\end{equation}
for expectiles, where $\beta=\frac{\alpha}{1-\alpha}$. Define the
functions $\Sigma_{\gamma}(u)\coloneqq\gamma(1-u)+(1-\gamma)\min\bigg(1,\frac{1-u}{1-\frac{\beta-\frac{1}{\gamma}}{\beta-1}}\bigg)$
and $\Sigma(u)\coloneqq\frac{\alpha(1-u)}{\alpha-u(2\alpha-1)}$.
Both functions coincide at $u=0$, $u=1$ and $u=\frac{\alpha(1+\gamma)-1}{(2\alpha-1)\gamma}$;
indeed $\Sigma_{\gamma}(0)=\Sigma(0)=1$, $\Sigma_{\gamma}(1)=\Sigma(1)=0$
and 
\begin{equation}
\Sigma\left(\frac{\alpha(1+\gamma)-1}{(2\alpha-1)\gamma}\right)=\Sigma_{\gamma}\left(\frac{\alpha(1+\gamma)-1}{(2\alpha-1)\gamma}\right)=\frac{\alpha(1-\gamma)}{2\alpha-1}.\label{eq:17}
\end{equation}
As $\Sigma_{\gamma}$ is piece wise linear and $\Sigma$ concave, it
follows that $\Sigma_{\gamma}(u)\le\Sigma(u)$ for all $u\in[0,1]$.
With integration by parts it follows further that 
\begin{align}
\gamma\E X+(1-\gamma)\AVaR_{\frac{\beta-\frac{1}{\gamma}}{\beta-1}}(X) & =-\int_{0}^{1}F_{X}^{-1}(u)\,\mathrm{d}\Sigma_{\gamma}(u)\nonumber \\
 & =-\left.F_{X}^{-1}(u)\Sigma_{\gamma}(u)\right|_{u=0}^{1}+\int_{0}^{1}\Sigma_{\gamma}(u)\,\mathrm{d}F_{X}^{-1}(u)\nonumber \\
 & \le-\left.F_{X}^{-1}(u)\Sigma(u)\right|_{u=0}^{1}+\int_{0}^{1}\Sigma(u)\,\mathrm{d}F_{X}^{-1}(u)\label{eq:3-2}\\
 & =-\int_{0}^{1}F_{X}^{-1}(u)\,\mathrm{d}\Sigma(u)\nonumber \\
 & =s_{\alpha}(X)\nonumber 
\end{align}
and thus $e_{\alpha}\le s_{\alpha}$. The assertion follows, as for
every $u\in(0,1)$ there is $\gamma\in\bigl(\frac{1-\alpha}{\alpha\thinspace},1\bigr)$
($\gamma=\frac{1-\alpha}{u(1-2\alpha)+\alpha}$) so that $\Sigma_{\gamma}(u)=\Sigma(u)$
by~\eqref{eq:17} above (cf.\ Figure~\ref{fig:S} for illustration).
\end{proof}
We have the following comparison with the Average Value-at-Risk. The
comparison is sharp in the sense that the risk rates cannot be improved.
\begin{cor}
\label{cor:4}For every random variable~$X\in L^{1}$ it holds that
\begin{align}
e_{\frac{1}{2-\alpha}}(X) & \le\AVaR_{\alpha}(X),\qquad\alpha\in[0,1],\label{eq:3}\\
\shortintertext{\text{{and}}}\frac{\alpha}{3\alpha-1}\E X+\frac{2\alpha-1}{3\alpha-1}\AVaR_{2-\frac{1}{\alpha}}(X) & \le e_{\alpha}(X)\le\AVaR_{2-\frac{1}{\alpha}}(X)\label{eq:4}
\end{align}
for every $\alpha\in\left[\nicefrac{1}{2},1\right]$.

For non-negative random variables ($X\ge0$ a.s.)\ we further have
\begin{align}
\AVaR_{\alpha}(X) & \le\frac{1}{1-\alpha}e_{\frac{1}{2-\alpha}}(X)\label{eq:3-1}\\
\shortintertext{and}e_{\alpha}(X) & \le\frac{\alpha}{1-\alpha}\E X.\label{eq:12}
\end{align}
The risk rates in the preceding equations \eqref{eq:3}\textendash \eqref{eq:12}
are optimal, they cannot be improved.
\end{cor}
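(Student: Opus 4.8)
The plan is to reduce the inequalities to the comparison of integrated spectra together with the Kusuoka representation~\eqref{eq:Kusuoka} and the first–order condition~\eqref{eq:ExpectileFO}, and to treat the optimality claim afterwards by testing on two–point laws.

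\emph{The inequalities.} First note that the upper bound in~\eqref{eq:4} and the estimate~\eqref{eq:3} coincide: the substitution $\alpha\mapsto\frac1{2-\alpha}$ maps $[0,1]$ onto $[1/2,1]$ with inverse $\alpha\mapsto2-\frac1\alpha$, and carries $e_{\frac1{2-\alpha}}(X)\le\AVaR_\alpha(X)$ into $e_\alpha(X)\le\AVaR_{2-1/\alpha}(X)$; so it suffices to prove the latter for $\alpha\in[1/2,1]$. By Proposition~\ref{prop:Risk}, $e_\alpha(X)\le s_\alpha(X)$, and $s_\alpha$ is a spectral risk measure whose integrated spectrum $u\mapsto\int_u^1\frac{\alpha(1-\alpha)}{(\alpha-t(2\alpha-1))^2}\,\mathrm dt=\frac{\alpha(1-u)}{\alpha-u(2\alpha-1)}$ is exactly the function $\Sigma$ of that proof, while $\AVaR_{2-1/\alpha}$ is spectral with integrated spectrum $\min\bigl(1,\frac{\alpha(1-u)}{1-\alpha}\bigr)$. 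The integration-by-parts comparison of spectral functionals used in the step~\eqref{eq:3-2} then reduces $s_\alpha\le\AVaR_{2-1/\alpha}$ to the two elementary pointwise inequalities $\frac{\alpha(1-u)}{\alpha-u(2\alpha-1)}\le1$ (i.e.\ $\alpha\le1$) and $\frac{\alpha(1-u)}{\alpha-u(2\alpha-1)}\le\frac{\alpha(1-u)}{1-\alpha}$ (i.e.\ $u\le1$), both valid on $[0,1]$.

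\emph{Lower bound and the nonnegative case.} For the lower bound in~\eqref{eq:4} I specialise the Kusuoka representation~\eqref{eq:Kusuoka}: with $\beta=\frac\alpha{1-\alpha}$ the value $\gamma=\frac{\alpha}{3\alpha-1}$ lies in $[1/\beta,1]$ for $\alpha\in[1/2,1]$ (this amounts to $(2\alpha-1)^2\ge0$ and $\alpha\ge1/2$), and a short computation gives $\frac{\beta-1/\gamma}{\beta-1}=2-\frac1\alpha$ and $1-\gamma=\frac{2\alpha-1}{3\alpha-1}$, so the term of the maximum attached to this $\gamma$ is precisely the left-hand side of~\eqref{eq:4}; being one term of a maximum it is $\le e_\alpha(X)$. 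For the two estimates on nonnegative $X$, I use~\eqref{eq:ExpectileFO} together with $X-e_\alpha(X)=(X-e_\alpha(X))_+-(e_\alpha(X)-X)_+$ to obtain the identity $e_\alpha(X)=\E X+\frac{2\alpha-1}{1-\alpha}\E(X-e_\alpha(X))_+$; since $X\ge0$ forces $e_\alpha(X)\ge0$ and hence $(X-e_\alpha(X))_+\le X$, this gives $e_\alpha(X)\le\E X+\frac{2\alpha-1}{1-\alpha}\E X=\frac\alpha{1-\alpha}\E X$, which is~\eqref{eq:12}. Finally~\eqref{eq:3-1} follows by taking $q=0$ in~\eqref{eq:AVaR}: for $X\ge0$, $(1-\alpha)\AVaR_\alpha(X)\le\E X=e_{1/2}(X)\le e_{\frac1{2-\alpha}}(X)$ by the monotonicity~\eqref{eq:5} in the risk level.

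\emph{Optimality — the main obstacle.} That the rates cannot be improved is the delicate part; the plan is to evaluate everything on two–point laws $X$ with $F_X^{-1}=b\,\one_{(u_0,1]}$, for which $\E X=b(1-u_0)$, $\AVaR_\beta(X)=b\min\bigl(1,\frac{1-u_0}{1-\beta}\bigr)$, and $e_\alpha(X)=b\,\frac{\alpha(1-u_0)}{\alpha-u_0(2\alpha-1)}$ — the last because the maximum in~\eqref{eq:Kusuoka} is attained at the $\gamma$ whose piecewise linear $\Sigma_\gamma$ touches $\Sigma$ at $u_0$, exactly as in the proof of Proposition~\ref{prop:Risk}. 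Choosing $u_0$ with $\max\bigl(\beta,\frac{\alpha\beta}{2\alpha-1}\bigr)<u_0<1$ yields $e_\alpha(X)>\AVaR_\beta(X)$ whenever $\beta<2-\frac1\alpha$, which shows that the levels in~\eqref{eq:3} and in the upper bound of~\eqref{eq:4} are sharp; and $u_0=2-\frac1\alpha$ turns the lower bound of~\eqref{eq:4} into an equality, which combined with the normalisation $c_1+c_2=1$ forced by evaluating at constant $X$ pins the coefficients to $\frac\alpha{3\alpha-1}$ and $\frac{2\alpha-1}{3\alpha-1}$. For~\eqref{eq:12} the law with $P(X=0)=p$, $P(X=b)=1-p$ gives $e_\alpha(X)/\E X\to\frac\alpha{1-\alpha}$ as $p\uparrow1$, and for~\eqref{eq:3-1} the law with $P(X=0)=\alpha$ attains $(1-\alpha)\AVaR_\alpha(X)=\E X$, the equality case of the step $\AVaR_\alpha(X)\le\frac1{1-\alpha}\E X$ used above. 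The only genuinely fiddly points I anticipate are bookkeeping: verifying that the two–point identity for $e_\alpha$ really is a consequence of the touching-point structure already established for Proposition~\ref{prop:Risk}, and keeping track of the direction of the inequalities produced by the normalisation at constant $X$.
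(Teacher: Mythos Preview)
Your arguments for the inequalities are correct. For~\eqref{eq:3} and the upper bound in~\eqref{eq:4} you follow the paper's route exactly: bound $e_\alpha$ by the enveloping spectral functional $s_\alpha$ from Proposition~\ref{prop:Risk} and then compare integrated spectra, which is precisely the $\Sigma\le\Sigma_\alpha$ step the paper uses. The lower bound in~\eqref{eq:4} via the choice $\gamma=\frac{\alpha}{3\alpha-1}$ in the Kusuoka representation is also identical to the paper.

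Where you diverge is in~\eqref{eq:12} and~\eqref{eq:3-1}. The paper obtains~\eqref{eq:12} by applying H\"older to the spectral representation~\eqref{eq:8}, $s_\alpha(X)\le\max_u\frac{\alpha(1-\alpha)}{(\alpha-u(2\alpha-1))^2}\cdot\E X=\frac{\alpha}{1-\alpha}\E X$, and obtains~\eqref{eq:3-1} by yet another integrated-spectrum comparison $\min\bigl(1,\frac{1-u}{1-\alpha}\bigr)\le\frac{1}{1-\alpha}\Sigma_\gamma(u)$, where the mismatch at $u=0$ is what forces $X\ge0$. Your alternatives---rewriting the first-order condition as $e_\alpha(X)=\E X+\frac{2\alpha-1}{1-\alpha}\E(X-e_\alpha(X))_+$ and bounding $(X-e_\alpha(X))_+\le X$, respectively plugging $q=0$ into~\eqref{eq:AVaR} and invoking the monotonicity~\eqref{eq:5}---are shorter and avoid the spectral machinery entirely; the paper's route has the virtue of keeping everything inside the same $\Sigma$-comparison framework, but yours is more direct.

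On optimality you actually do more than the paper, which only argues tightness of~\eqref{eq:3} via the tangency $\Sigma_\gamma'(1)\to\Sigma'(1)$ and is silent on the remaining cases. Your two-point computations are correct and pin down sharpness for~\eqref{eq:3}, both bounds of~\eqref{eq:4}, and~\eqref{eq:12}. One caveat: your sharpness argument for~\eqref{eq:3-1} only shows equality in the intermediate step $(1-\alpha)\AVaR_\alpha(X)=\E X$, not in the full chain $(1-\alpha)\AVaR_\alpha(X)\le\E X\le e_{1/(2-\alpha)}(X)$; for the Bernoulli law with $P(X{=}0)=\alpha$ the second inequality is strict, so this particular family does not witness tightness of the constant $\frac{1}{1-\alpha}$. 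The paper does not address this case either, so this is not a deficiency relative to the published proof, but it is worth being aware that the claimed optimality of~\eqref{eq:3-1} remains unproved in both treatments.
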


\begin{rem}
The preceding corollary might give the impression that $e_{\alpha}$
is \textquoteleft weak\textquoteright{} in the sense that it attains
smaller values than the average value at risk and is comparable to
the risk neutral expectation. However, it holds that $e_{\alpha}(X)\to1$
for $\alpha\to1$, as follows readily from~\eqref{eq:ExpectileFO}.
Further, we have that the Average Value-at-Risk is a lower bound for
the expectiles in view of~\eqref{eq:3-1}, so that expectiles are
at least as \textquoteleft strong\textquoteright{} as the Average
Value-at-Risk.
\end{rem}

\begin{proof}[Proof of Corollary~\ref{cor:4}]
Employing the notation of the proof of Proposition~\ref{prop:Risk}
and $\Sigma_{\alpha}(u)\coloneqq\min\left(1,\frac{1-u}{\frac{1}{\alpha}-1}\right)$,
we have that $\Sigma(u)\le\Sigma_{\alpha}(u)$. As in the proof above
we conclude that $\mathcal{R_{\alpha}}(X)\le\AVaR_{\alpha}(X)$ and
with~\eqref{eq:1-1} that~\eqref{eq:3}. The inequality~\eqref{eq:3}
is tight, as $\Sigma_{\gamma}^{\prime}(1)\xrightarrow[\gamma\to1]{}\Sigma^{\prime}(1)$.

As for the remaining inequality choose $\gamma=\frac{\alpha}{3\alpha-1}$
in~\eqref{eq:Kusuoka}, and replace~$\alpha$ by $\frac{1}{2-\alpha}$
in~\eqref{eq:4} to obtain~\eqref{eq:3}. 

The inequality $\min\left(1,\frac{1-u}{1-\alpha}\right)\le\frac{1}{1-\alpha}\Sigma_{\gamma}(u)$
is evident for every $u\in[0,1]$, and the remaining assertion~\eqref{eq:3-1}
follows by the same reasoning as above. However, for inequality~\eqref{eq:3-2}
to hold true it is essential that $X\ge0$ a.s. 

H\"older's inequality, applied to~\eqref{eq:8}, gives 
\[
\E X\le s_{\alpha}(X)\le\int_{0}^{1}F_{X}^{-1}(u)\,\mathrm{d}u\cdot\max_{u\in[0,1]}\frac{\alpha(1-\alpha)}{\bigl(\alpha-u(2\alpha-1)\bigr)^{2}}=\E X\cdot\frac{\alpha}{1-\alpha}
\]
and thus~\eqref{eq:12}.
\end{proof}

\section{\label{sec:Conditional}Conditional and Dynamic Risk Measure}

Risk functionals \textendash ~as discussed above~\textendash{} are
employed to assess the risk of a random outcome. For this reason,
they have the economic interpretation of an insurance premium, while
the random outcome is the random insurance benefit (the random variable).
While the premium is known beforehand, the insurance benefit (the
random outcome) is not, it is revealed later.

Conditional risk measures are employed in risk management over time,
they address stochastic processes instead of random variables. Nested
risk measures, which are compositions of risk functionals over time,
enjoy the economic interpretation of risk premiums for insurance on
a rolling horizon basis. For a discussion of nested risk functionals
we may refer to \citet{Cheridito2011,Riedel2004,Shapiro2012,Ruszczynski}
and \citet{PichlerSchlotterMartingales}.

\subsection{The conditional expectile\label{subsec:CondExpectile}}

Definition~\ref{def:Expectile} allows extending the expectile to
conditional expectiles, which are conditioned on some $\sigma$\nobreakdash-algebra.
This constitutes a major building block to extend the definition of
expectiles from random variables to stochastic processes.
\begin{defn}[Conditional expectiles]
Let $X\in L^{1}$ be a random variable and $\mathcal{G}$ be a sub
$\sigma$\nobreakdash-algebra of $\mathcal{F}$, $\mathcal{G}\subset\mathcal{F}$
and $\alpha$ a $\mathcal{G}$\nobreakdash-measureable variable with
values in $[0,1]$. The $\mathcal{G}$\nobreakdash-measureable random
variable~$Z$ satisfying 
\begin{equation}
\alpha\cdot\E\bigl(\bigl(X-Z\bigr)_{+}\mid\mathcal{G}\bigr)=(1-\alpha)\cdot\E\bigl(\bigl(Z-X\bigr)_{+}\mid\mathcal{G}\bigr)\qquad\text{a.s.}\label{eq:7}
\end{equation}
is called the \emph{conditional expectile} (i.e., the conditional
version of~\eqref{eq:ExpectileFO}) and denoted $Z=e_{\alpha}(X\mid\mathcal{G})$.
As usual for the conditional expectation, we shall also write $e^{\mathcal{G}}(X)\coloneqq e(X\mid\mathcal{G})$
and $e^{Y=y}(X)\coloneqq e(X\mid Y=y)$ for the conditional expectile
and its versions.
\end{defn}

The solution of the problem~\eqref{eq:7} exists and is unique for
the same reasons as for the usual expectile, and $e_{\alpha}(X\mid\mathcal{G})\in L^{1}$,
as $\bigl(e_{\alpha}(X\mid\mathcal{G})-X\bigr)_{+}$ and $\E\bigl(\bigl(X-e_{\alpha}(X\mid\mathcal{G})\bigr)_{+}\mid\mathcal{G}\bigr)$
exist in~\eqref{eq:7}.
\begin{rem}
Based on the properties of the conditional expectation (cf.\ Section~\ref{sec:Elicitable}),
we have the following properties of the conditional expectile.
\begin{enumerate}[noitemsep]
\item \label{enu:1c} $e_{\alpha}^{\mathcal{G}}(X)\le e_{\alpha}^{\mathcal{G}}(Y)$
a.e.\ for all $X\le Y$ almost everywhere, 
\item \label{enu:2c} $e_{\alpha}^{\mathcal{G}}(X+Y)\le e_{\alpha}^{\mathcal{G}}(X)+e_{\alpha}^{\mathcal{G}}(Y)$
a.e.,
\item \label{enu:3c} $e_{\alpha}^{\mathcal{G}}(\lambda\,X)=\lambda\,e_{\alpha}^{\mathcal{G}}(X)$
for all $\lambda>0$ and $\lambda$ which is $\mathcal{G}$\nobreakdash-measurable, 
\item \label{enu:4c} $e_{\alpha}^{\mathcal{G}}(c+X)=c+e_{\alpha}^{\mathcal{G}}(X)$
for all $\mathbb{R}$\nobreakdash-valued $c$ measurable with respect
to $\mathcal{G}$. 
\end{enumerate}
\end{rem}

In what follows, we shall consider the conditional expectile for a
single $\sigma$\nobreakdash-algebra first and discuss regression.
Next, we consider filtrations $\mathcal{F}=(\mathcal{F}_{t})_{t\in\mathcal{T}}$,
typically generated by a stochastic process $X=(X_{t})_{t\in\mathcal{T}}$.

\subsection{\label{sec:Expectile}Conditional expectiles in stochastic optimization
and regression}

Stochastic optimization and most typical problems in machine learning
(as the training of neural networks) as well as specific problems
in inverse problems (cf.\ \citet{LuPereverzev}) consider the problem
\begin{align}
\text{minimize } & f_{0}(x)\coloneqq\E f(x,\xi)\label{eq:Problem0}\\
\text{subject to } & x\in\mathcal{X},\nonumber 
\end{align}
where the objective is a risk neutral expectation, $f\colon\mathcal{X}\times\mathbb{R}^{m}\to\mathbb{R}$
is a function, $\mathcal{X}\subset\mathbb{R}^{d}$ is closed and $\xi$
is a random variable with values in~$\mathbb{R}^{m}$. Sample average
approximation builds on independent realizations~$\xi_{i}$ of identically
distributed random variable~$\xi$, $i=1,\dots$, to solve~\eqref{eq:Problem0}
in real world applications. To this end, the empirical version 
\[
\hat{f}_{n}(x)\coloneqq\frac{1}{n}\sum_{i=1}^{n}f(x,\xi_{i})
\]
 is considered instead of the expectation $\E f(x,\xi)$ in~\eqref{eq:Problem0}
for varying~$x\in\mathcal{X}$.

We consider the measure points (observations) $X\in\mathcal{X}$ to
be random (with measure $P$) as well and intend to \textquoteleft learn\textquoteright{}
the function~$f_{0}$ based on observations 
\begin{equation}
\bigl(X_{i},f(X_{i},\xi_{i})\bigr),\quad i=1,\dots,n,\label{eq:0}
\end{equation}
where $(X_{i},\xi_{i})$ are revealed jointly (cf.\ \citet{DentchevaKernels}
for further motivation in stochastic optimization and an alternative
approach); even more generally, we consider the iid observations 
\begin{equation}
(X_{i},f_{i}),\quad i=1,\dots n,\label{eq:16}
\end{equation}
which is~\eqref{eq:0} with~$f_{i}\coloneqq f(X_{i},\xi_{i})$. 

To model~\eqref{eq:16}, let~$\rho$ be the probability measure
of the joint distribution $(X,f)$ and denote the marginal measure
by $P(A)\coloneqq\rho(A\times\mathbb{R})$. Then there exists a regular
conditional probability kernel (cf.\ \citet{Kallenberg2002Foundations})
so that
\begin{equation}
\rho(A\times B)=\int_{A}\rho(f\in B|\,x)\,P(dx).\label{eq:rho}
\end{equation}

The bivariate measure~$\rho$ in~\eqref{eq:rho} is not an artifact.
Indeed, denote the conditional measures of~$f$ given~$X$ by the
Markov kernel $\rho\colon\mathcal{X}\times\mathcal{B}(\mathbb{R})\to[0,1]$,
that is, $\rho(f\in A\mid X=x)=\rho(x,A)$, then $(X,f)$ jointly
follow the composed measure~\eqref{eq:rho}, 
\[
(X,f)\sim\rho,
\]
and hence both approaches are equivalent.

For a random vector $(X,f)\in\mathbb{R}^{d}\times\mathbb{R}$ with
law~$\rho$ set 
\begin{equation}
f_{0}(x)\coloneqq\E(f\mid X=x);\label{eq:f0}
\end{equation}
 this definition notably corresponds to 
\[
f_{0}(x)=\E(f(X,\xi)\mid X=x)
\]
in the setting~\eqref{eq:0} above. For this reason, the stochastic
optimization problem~\eqref{eq:Problem0} is equivalent to\footnote{The essential infimum$\essinf(f\mid X)$ is the largest random variable~$g$,
measurable with respect to $\sigma(X)$ (the $\sigma$\nobreakdash-algebra
generated by $X$), so that $g\le f$, cf.~\citet[Definition~A.34]{Follmer2004}.
Measurability is the crucial difference in comparison to the (unconditional)
essential supremum in Footnote~\ref{fn:esssup}.} 
\begin{equation}
\essinf_{x\in\mathcal{X}}\E(f\mid X=x),\label{eq:Problem2}
\end{equation}
 where $(X,f)$ is a random variable with law $\rho$, provided that
$\operatorname{supp}P=\mathcal{X}$, where 
\[
\supp P\coloneqq\bigcap\big\{ A\colon A\text{ is closed and }P(A)=1\big\}
\]
 is the support.\footnote{\label{fn:Support-1}Cf.\ \citet{Rueschendorf} for the support of
the marginal measure~$P$.} 

Note, however, that not every random vector $(X,f)$ can be recast
as in~\eqref{eq:0} for a function~$f$ and a random~$\xi$. For
this reason, the problem formulation~\eqref{eq:Problem2} is more
general than the genuine problem~\eqref{eq:Problem0}.

\subsection{Risk assessment with conditional expectiles}

To incorporate risk in the assessment, consider the conditional expectation~\eqref{eq:f0}
and define 
\[
f_{\alpha}(x)\coloneqq e_{\alpha}(f\mid X=x),
\]
where $e_{\alpha}^{\sigma(X)}$ is the conditional expectile introduced
in Section~\ref{subsec:CondExpectile} above. Based on~\eqref{eq:5},
we have that 
\[
f_{\alpha}(x)\ge f_{0}(x),\qquad\text{for }\alpha\ge\nicefrac{1}{2},\ x\in\mathcal{X}.
\]

The function $f_{\alpha}$ intentionally \emph{overestimates} (overrates)
the risk-free assessment~$f_{0}$ and the surplus $f_{\alpha}-f_{0}$
is the amount attributed to risk aversion.

To solve the risk averse version of the stochastic optimization problem~\eqref{eq:Problem2},
\begin{align}
\text{minimize } & e_{\alpha}(f\mid X=x)\label{eq:Problem0-2-1}\\
\text{subject to } & x\in\mathcal{X},\nonumber 
\end{align}
just find an estimator for $\hat{e}_{\alpha}$ for $e_{\alpha}$ first
and then solve 
\begin{align}
\text{minimize } & \hat{e}_{\alpha}(x)\label{eq:Problem0-2-1-1}\\
\text{subject to } & x\in\mathcal{X}.\nonumber 
\end{align}
The substitute $\hat{e}_{\alpha}(\cdot)$ is chosen in an adequate
space of functions. \citet{DentchevaKernels} consider the Nadaraya\textendash Watson
kernel estimator to solve the problem. Here, we exploit the problem
by using reproducing kernel Hilbert spaces (RKHS) with kernel function~$k$,
where we may refer to \citet{Berlinet2004} for details. 
\begin{defn}
For a kernel function $k\colon\mathcal{X}\times\mathcal{X}\to\mathbb{R}$,
the RKHS space $\mathcal{H}_{k}$ is the completion of the functions
$f(x)=\sum_{i=1}^{\ell}w_{i}\,k(x,x_{i})$ with respect to the inner
product 
\[
\left\langle k(\cdot,x_{i})\mid k(\cdot,x_{j})\right\rangle =k(x_{i},x_{j}),\qquad i,j=1,\dots,\ell,
\]
where $x_{i}$ and $x_{j}\in\mathcal{X}$. 

The regularized problem is 
\begin{equation}
\text{minimize }\frac{1}{n}\sum_{i=1}^{n}\ell_{\alpha}\bigl(\hat{e}_{\alpha}(X_{i})-f_{i}\bigr)+\lambda\|\hat{e}_{\alpha}\|_{k}^{2},\label{eq:39}
\end{equation}
where $\hat{e}_{\alpha}(\cdot)\in\mathcal{H}_{k}$. It follows from
the generalized representer theorem (cf.\ \citet{Schoelkopf2001}),
that the function $e_{\alpha}$ is given by $e_{\alpha}(\cdot)=\frac{1}{n}\sum_{i=1}^{n}w_{i}\,k(\cdot,X_{i})$,
that is, the supporting points are exactly the points $X_{i}$, $i=1,\dots,n$,
where measurements $f_{i}$, $i=1,\dots,n$, are available. It might
be convenient in some situations to find the best approximation located
at the points $\tilde{x}_{j}$, $j=1,\dots,\tilde{n}$, that is, the
function 
\[
\hat{e}_{\alpha}(\cdot)=\frac{1}{\tilde{n}}\sum_{j=1}^{\tilde{n}}w_{j}k(\cdot,\tilde{x}_{j}),
\]
for fewer or special design points $\tilde{x}_{j}$, $j=1,\dots,\tilde{n}$.
We describe the equations for this generalized problem.
\begin{algorithm}[t]
	\KwIn{Measurements $(f_i,X_i)$, $i=1,\dots n$, and support points $\tilde x_j$, $j=1,\dots,\tilde n$.}

\KwOut{The weights $w_j$, $j=1,\dots,\tilde n$, of the function \begin{equation}\label{eq:Best}\hat e_\alpha(\cdot)=\frac1{\tilde n}\sum_{j=1}^{\tilde n} w_j k(\cdot,\tilde x_j)\end{equation}  minimizing~\eqref{eq:39}.}
Set
\begin{equation}
	K_{ij}\coloneqq k(X_i,\tilde x_j)
\end{equation} for $i=1,\dots n$ and $j=1,\dots \tilde n$, and \begin{equation}\tilde K_{ij}\coloneqq k(\tilde x_i, \tilde x_j)\end{equation} for $i,j=1,\dots,\tilde n$.

\While{change of the weights $w$ encountered}{
\For{$i=1$ \KwTo $n$}{update $A_{ii}\leftarrow \begin{cases}\alpha & \text{if }f_i\le\frac1{\tilde n}\sum_{j=1}^{\tilde n}w_j\, k(X_i,\tilde x_j),\\ 1-\alpha & \text{else}\end{cases}$}
update \begin{equation}\label{eq:14}
	w\leftarrow w-\Bigl( \frac\lambda{\tilde n^2} \tilde K+ \frac1{n^2\tilde n}K^\top A K\Bigr)^{-1}\cdot \Bigl(\frac\lambda{\tilde n^2} \tilde K w + \frac1{n^2\tilde n}K^\top A K w - \frac1{n\tilde n}K^\top A f\Bigr)\end{equation}}

\KwResult{The best approximating function~\eqref{eq:Best}.}

\caption{Newton-like iteration to solve~\eqref{eq:39}\label{alg:Newton}}
\end{algorithm}

The first order conditions of problem~\eqref{eq:39} for the weights
$w_{j}$, $j=1,\dots,\tilde{n}$, are 
\begin{align}
0=\frac{1}{n}\sum_{i=1}^{n}2 & \cdot\left(\frac{1}{\tilde{n}}\sum_{j^{\prime}=1}^{\tilde{n}}w_{j^{\prime}}k(X_{i},\tilde{x}_{j^{\prime}})-f_{i}\right)\cdot\left\{ \begin{array}{cc}
\alpha & \text{if }f_{i}\le\hat{e}_{\alpha}(X_{i})\\
1-\alpha & \text{if }f_{i}\ge\hat{e}_{\alpha}(X_{i})
\end{array}\right\} \cdot\frac{1}{\tilde{n}}k(\tilde{x}_{j},X_{i})\ +\nonumber \\
 & +2\frac{\lambda}{\tilde{n}^{2}}\sum_{j=1}^{\tilde{n}}w_{j}k(\tilde{x}_{i},\tilde{x}_{j}).\label{eq:44}
\end{align}
Define $\tilde{K}\coloneqq\bigl(k(\tilde{x}_{\ell},\tilde{x}_{j})\bigr)_{\ell,j=1}^{n}$,
$K\coloneqq\bigl(k(X_{i},\tilde{x}_{j})\bigr)_{i=1,j=1}^{n,\tilde{n}}$
and 
\begin{align*}
A(w) & \coloneqq\operatorname{diag}\bigl(a_{i}(w),\ i=1,\dots,n\bigr)\\
\shortintertext{\text{with entries}}a_{i}(w) & =\begin{cases}
\alpha & \text{if }f_{i}\le\frac{1}{\tilde{n}}\sum_{j=1}^{n}w_{j}k(X_{i},\tilde{x}_{j}),\\
1-\alpha & \text{if }f_{i}\ge\frac{1}{\tilde{n}}\sum_{j=1}^{n}w_{j}k(X_{i},\tilde{x}_{j})
\end{cases}
\end{align*}
on the diagonal. Then the equations~\eqref{eq:44} rewrite as 
\begin{equation}
\left(\frac{\lambda}{\tilde{n}^{2}}\tilde{K}+\frac{1}{n^{2}\tilde{n}}K^{\top}A(w)K\right)w=\frac{1}{n\tilde{n}}K^{\top}A(w)f.\label{eq:FixPoint}
\end{equation}
This equation is not linear in~$w$, as $A(w)$ depends in a nonlinear
way on $w$. However, the problem can be solved by inverting the matrix
to obtain a fixed point equation. With that, the equation can be iterated,
and the algorithm converges after finitely many iterations, cf.~\eqref{eq:14}
in Algorithm~\ref{alg:Newton}. Figure~\ref{fig:Expectile} displays
a typical result of expectile regression. \citet{SteinwartFarooq2018}
is a starting point in investigating convergence properties of the
expectile regression problem. 
\end{defn}

\begin{figure}
\centering{}\includegraphics[viewport=20bp 0bp 640bp 440bp,clip,width=0.7\textwidth]{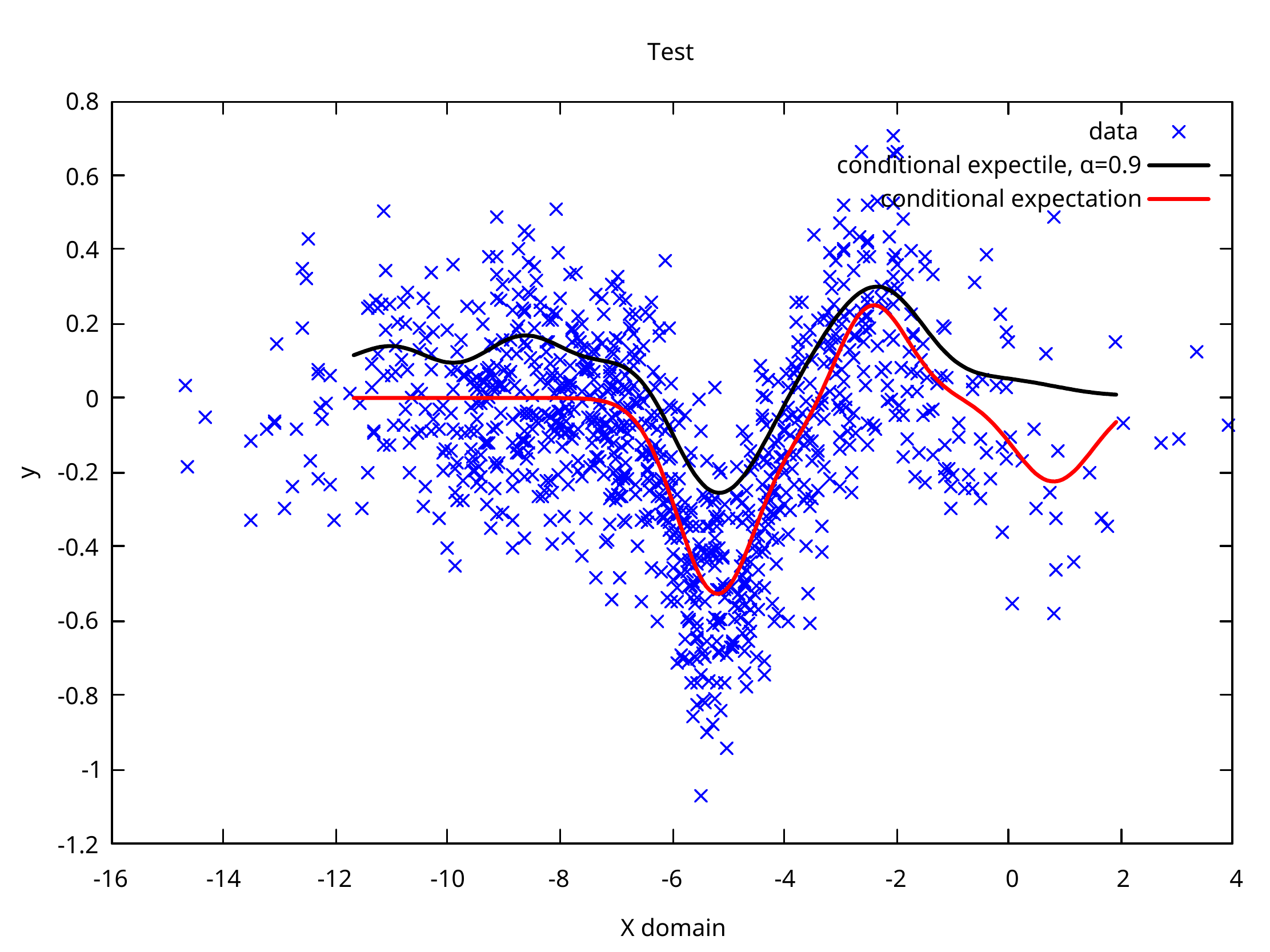}\caption{The expectile $\hat{e}_{90\,\%}(\cdot)$ based on $n=1000$ observations
overestimates the conditional expectation\label{fig:Expectile}}
\end{figure}

\begin{rem}
Note that the inverted matrix in~\eqref{eq:14} is the derivative
of the right-hand side with respect to~$w$, as $A$ is constant
for small changes in~$w$. For this reason, the iteration in Algorithm~\ref{alg:Newton}
is a Newton iteration in essence, although the function~\eqref{eq:ell}
is not differentiable. As $A(w)$ is constant for small variations
of~$w$, thus~\eqref{eq:14} vanishes locally.
\end{rem}

\section{\label{sec:Processes}Risk aversion in stochastic processes}

The considerations on the expectile in the preceding sections are based on random variables.
The conditional variant in the expectile regression is achieved with a single $\sigma$\nobreakdash-algebra.
In what follows, we generalize the expectile for stochastic processes
\textendash{} in a discrete time setting first, and then in continuous
time. 

\subsection{Nested expectile in discrete time}

Consider a stochastic process $X=(X_{t_{i}})_{i=0}^{n}$ in discrete
time, where $0\eqqcolon t_{0}<t_{1}<\dots<t_{n}=T$. For a dissection
in time consider the increments 
\[
X_{T}=X_{t_{0}}+(X_{t_{1}}-X_{t_{0}})+\dots+(X_{t_{n}}-X_{t_{n-1}}).
\]
The stochastic process $X$ is adapted to the filtration $\mathcal{F}$,
that is, $X_{t}$ is measurable for every $\mathcal{F}_{t}$, $t\ge0$,
so most often we just may choose $\mathcal{F}_{t_{i}}\coloneqq\sigma(X_{t_{j}}\colon j\le i)$.
As well, we shall denote the sequence of $\sigma$\nobreakdash-algebras
by $\mathcal{F}_{t_{0}\colon t_{n}}$.

In what follows, we shall associate a certain risk for the time period
$\Delta t\coloneqq t_{i+1}-t_{i}$ to come. For convenience in the
presentation in what follows, we introduce the \emph{rescaled} version
of the expectile as 
\[
\tilde{e}_{\beta}(\cdot)\coloneqq e_{\frac{1+\sqrt{\beta}}{2}}(\cdot)
\]
(i.e., $\alpha-\frac{1}{2}=\frac{\sqrt{\beta}}{2}$). The main reason
for the rescaling is that $e_{\nicefrac{1}{2}}(X)=\E X$, while $\AVaR_{0}(X)=\E X$,
e.g. To ensure consistent parametrizations with other risk measures,
we rescale the risk level so that $\tilde{e}_{0}(X)=\E X$ is associated
with the risk-free assessment, while $\tilde{e}_{1}(X)=\esssup X$
is the total risk averse assessment. The varying dynamic ($\sqrt{\beta}$
instead of $\beta$) turns out to be the natural choice in the continues
time situation addressed below.
\begin{defn}[Nested expectile]
\label{def:Nested}Let $\bigl(\Omega,\mathcal{F}=(\mathcal{F}_{t_{i}})_{i=1}^{n},P\bigr)$
be a filtered probability space and $\beta\colon\{t_{0},\dots,t_{n}\}\to[0,1]$
be stochastic process adapted to the filtration $\mathcal{F}=(\mathcal{F}_{t_{i}})_{i=1}^{n}$.
The nested expectile of the process with respect to the filtration
$\mathcal{F}_{t_{0}:t_{n}}$, denoted $\tilde{e}_{\beta(\cdot)}^{\mathcal{F}_{t_{0}:t_{n}}}$,
is 
\begin{equation}
\tilde{e}_{\beta(\cdot)}^{\mathcal{F}_{t_{0}:t_{n}}}(X)\coloneqq X_{0}+\tilde{e}_{\beta(t_{0},X_{t_{o}})\cdot(t_{1}-t_{0})}^{\mathcal{F}_{0}}\left(X_{t_{1}}-X_{t_{0}}+\dots+\tilde{e}_{\beta(t_{n-1},X_{t_{n-1}})\cdot(t_{n}-t_{n-1})}^{\mathcal{F}_{t_{n-1}}}\left(X_{T}-X_{t_{n-1}}\right)\right),\label{eq:Nested-1}
\end{equation}
or slightly more explicitly 

\[
\tilde{e}_{\beta(\cdot)}^{\mathcal{F}_{t_{0}:t_{n}}}(X)=X_{0}+\tilde{e}_{\beta(t_{0},X_{t_{0}})\cdot(t_{1}-t_{0})}^{\mathcal{F}_{0}}\begin{pmatrix}\begin{array}{l}
X_{t_{1}}-X_{t_{0}}+\\
\dots+\tilde{e}_{\beta(t_{n-2},X_{t_{n-2}})\cdot(t_{n-1}-t_{n-2})}^{\mathcal{F}_{t_{n-2}}}\begin{pmatrix}\begin{array}{l}
X_{t_{n-1}}-X_{t_{n-2}}\\
+\tilde{e}_{\beta(t_{n-1},X_{t_{n-1}})\cdot(t_{n}-t_{n-1})}^{\mathcal{F}_{t_{n-1}}}\left(X_{T}-X_{t_{n-1}}\right)
\end{array}\end{pmatrix}
\end{array}\end{pmatrix}.
\]
\end{defn}

Nested Risk measures have been considered by \citet{PhilpottMatosFinardi,PhilpottMatos},
e.g. In discrete time, fundamental properties of the Average Value-at-Risk
have been elaborated by \citet{ShapiroXin}, although for deterministic
risk rates only and for random variables instead of stochastic processes.
The definition above is dynamic, as the risk rate~$\beta$ is an adapted process itself.
Note that the risk rate at time~$t$ may be chosen to reflect the history of observations up to~$t$, it may
depend on $\{t_i\le t\colon i=1,\dots,n\}$.

We consider the following example, which prepares for the Wiener process.
\begin{example}[Random walk, cf.\ \citet{PichlerSchlotter2022}]
\label{exa:Walk}Consider a random walk process starting at $X_{0}$
with independent Markovian increments 
\begin{equation}
X_{t_{i+1}}-X_{t_{i}}\sim\mathcal{N}(0,t_{i+1}-t_{i})\label{eq:RandomWalk}
\end{equation}
and constant risk rate $\beta(t,x)=\beta$. With~\eqref{def:Nested}
and the asymptotic formula~\eqref{def:Nested} for the normal distribution,
we have that 
\begin{align}
X_{t_{1}}+\tilde{e}_{\beta}^{\mathcal{F}_{t_{1}}}(X_{t_{2}}-X_{t_{1}}) & =X_{t_{1}}+\sqrt{t_{i+1}-t_{i}}\sqrt{\frac{2}{\pi}}\sqrt{\beta(t_{i+1}-t_{i})}+o\bigl(t_{t+1}-t_{i}\bigr)\nonumber \\
 & =X_{t_{1}}+\sqrt{\frac{2\beta}{\pi}}(t_{i+1}-t_{i})+o\bigl(t_{t+1}-t_{i}\bigr).\label{eq:DriftWalk}
\end{align}
Nesting these expressions as in Definition~\ref{def:Nested} gives
the explicit expression 
\begin{equation}
\tilde{e}_{\beta(\cdot)}^{\mathcal{F}_{t_{0}:T}}(X)=X_{0}+\sqrt{\frac{2\beta}{\pi}}T+o(T),\label{eq:Drift}
\end{equation}
where $T$ is the terminal time, while 
\[
\tilde{e}_{0}^{\mathcal{F}_{t_{0}:T}}(X)=X_{0}
\]
for the risk rate $\beta=0$. The amount attributed to the risk averse
assessment in~\eqref{eq:Drift} thus accumulates linearly with time.
\end{example}

\begin{rem}[Tower property]
We emphasize as well that Definition~\ref{def:Nested} explicitly
involves time, the risk $\beta(t_{i})\cdot(t_{i+1}-t_{i})$ is associated
to the time interval starting at $t_{i}$ and ending at $t_{i+1}$.
With a further point in between, $t_{i+\nicefrac{1}{2}}$, the components
of the risk functionals above are 
\[
\tilde{e}_{\beta(t_{i})\cdot(t_{i+\nicefrac{1}{2}}-t_{i})}^{\mathcal{F}_{0}}\left(X_{t_{i+\nicefrac{1}{2}}}-X_{t_{i}}+\tilde{e}_{\beta(t_{i+\nicefrac{1}{2}})\cdot(t_{i+1}-t_{i+\nicefrac{1}{2}})}^{\mathcal{F}_{t_{i+\nicefrac{1}{2}}}}\left(X_{t_{i+1}}-X_{t_{i+\nicefrac{1}{2}}}\right)\right)
\]
instead of 
\[
\tilde{e}_{\beta(t_{i})\cdot(t_{i+1}-t_{i})}^{\mathcal{F}_{0}}\left(X_{t_{i+1}}-X_{t_{i}}\right).
\]
With that, the risk rates accumulate over time: accumulated risk rates
are $\beta(t_{i})(t_{i+\nicefrac{1}{2}}-t_{i})+\beta(t_{i+\nicefrac{1}{2}})(t_{i+1}-t_{i-\nicefrac{1}{2}})$
in the first case. This amount indeed coincides with $\beta(t_{i})(t_{i+1}-t_{i})$
(this is the risk rate in the second case), provided that $\beta(t_{i})=\beta(t_{i+\nicefrac{1}{2}})$,
i.e., the risk assessment does not vary over time.

For the expectation, the corresponding property is the tower property,
that is, $\E\bigl(\E X\mid\mathcal{G})\bigr)=\E X$. 
\end{rem}

\subsection{The nested expectile in continuous time}

In order to assign risk to a stochastic process in continuous time,
we consider the nested formulation introduced above for decreasing
time-steps. 
\begin{defn}[Nested expectile]
Let $X=(X_{t})_{t\le T}$ be a stochastic process adapted to $\mathcal{F}=(\mathcal{F}_{t})_{t\le T}$
and $\beta=(\beta_{t})_{t\le T}$ be c\`adl\`ag (i.e., right continuous,
with left limits) and adapted. With the nested expectile defined in
Definition~\ref{def:Nested}, the nested expectile is 
\begin{equation}
\tilde{e}_{\beta}^{\mathcal{F}}(X)=\lim_{\max\Delta t\to0}\tilde{e}_{\beta_{t_{0}:t_{n}}}^{\mathcal{F}_{t_{0}:t_{n}}}(X),\label{eq:Nested}
\end{equation}
provided that the limit with respect to decreasing mesh sizes $\max\Delta t\coloneqq\max_{i=1}^{n}t_{i+1}-t_{i}$
exists.
\end{defn}

\begin{example}[State independent risk rates]
Example~\ref{exa:Walk} generalizes for a state independent, but
time dependent Riemann integrable risk rate $\beta(x,t)=\beta(t)$.
As above, we obtain that
\[
X_{t_{1}}+\tilde{e}_{\beta}^{\mathcal{F}_{t_{1}}}(X_{t_{2}}-X_{t_{1}})=X_{t_{1}}+\sqrt{\frac{2\beta(t_{i})}{\pi}}(t_{i+1}-t_{i})+o\bigl(t_{t+1}-t_{i}\bigr)
\]
and thus 
\[
\tilde{e}_{\beta(\cdot)}^{\mathcal{F}_{t_{0}:T}}(X)=X_{0}+\sqrt{\frac{2}{\pi}}\int_{0}^{T}\sqrt{\beta(t)}\,\mathrm{d}t
\]
for $\Delta t\to0$, as $\beta$ is Riemann integrable. Again, this
is an explicit expression for the total risk aversion of the entire
random walk process with increments~\eqref{eq:RandomWalk}.
\end{example}

\begin{defn}[Risk generator]
Let $(X_{t})_{t\ge0}$ be a stochastic process adapted to the filtration
$\sigma(X)$ and $\beta(t,x)$ be a risk rate. The risk generator
is 
\[
\mathcal{G}_{\beta}f(x,t)\coloneqq\lim_{h\to0}\frac{\tilde{e}_{\beta(t,X_{t})}^{\sigma(X)}\bigl(f(X_{t+h})|\,X_{t}=x\bigr)-f(x)}{h},
\]
provided that the limit exists.
\end{defn}

Note, that $\mathcal{G}$ is an operator, which maps the (smooth)
function $f$ to $\mathcal{G}_{\beta}f$, which is a function again.
In contrast to the risk-neutral generator, the risk generator $\mathcal{G}_{\beta}$
is possibly not linear, as we will see in what follows.
\begin{prop}
\label{prop:Risk2}Let $X_{t}$ follow the stochastic differential
equation 
\begin{equation}
\mathrm{d}X_{t}=\mu(t,X_{t})\,\mathrm{d}t+\sigma(t,X_{t})\,\mathrm{d}W_{t}\label{eq:SDE}
\end{equation}
with respect to the Wiener process (Brownian motion) $(W_{t})_{t\ge0}$
and the functions $\mu$ and $\sigma$ be Lipschitz, i.e., $|\mu(t,x)-\mu(t,y)|+|\sigma(t,x)-\sigma(t,y)|\le K|x-y|$
so that strong solutions of~\eqref{eq:SDE} exist. For a smooth function~$f$,
the risk generator is 
\begin{align}
\mathcal{G}_{\beta}f(t,x) & =\frac{\partial f(t,x)}{\partial t}+\mu(t,x)\cdot\frac{\partial f(t,x)}{\partial x}+\frac{1}{2}\sigma(t,x)^{2}\cdot\frac{\partial^{2}f(t,x)}{\partial x^{2}}\nonumber \\
 & \qquad+\sqrt{\frac{2}{\pi}\beta(t,x)}\cdot\left|\sigma(x,t)\cdot\frac{\partial f(t,x)}{\partial x}\right|.\label{eq:Risk}
\end{align}
\end{prop}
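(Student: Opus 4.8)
The plan is to localize in time: fix $x$, condition on $X_{t}=x$, and note that by the Markov property of the strong solution of~\eqref{eq:SDE} the conditional expectile in the definition of $\mathcal{G}_{\beta}$ is an ordinary expectile of the (then deterministic) law of $Y\coloneqq f(t+h,X_{t+h})-f(t,x)$, where $f$ is read as a time--space function $f(t,x)$ (so that $\partial_{t}f$ can appear). The interval $[t,t+h]$ carries risk level $\beta(t,x)\,h$ in the sense of Definition~\ref{def:Nested}, which tends to $0$ with $h$; so the computation is a first-order expansion of the rescaled expectile $\tilde e_{\beta(t,x)h}(Y)$ in the small parameters $h$ and $\beta(t,x)h$, and it suffices to identify the coefficient of $h$. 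In spirit this is the ``frozen coefficients'' reduction of Example~\ref{exa:Walk} to the Gaussian case~\eqref{eq:13}, made precise for a general diffusion and a general smooth~$f$.

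The first ingredient is an expansion of the expectile around the expectation. Writing $\alpha=\tfrac12+\varepsilon$ in the first order condition~\eqref{eq:ExpectileFO} and using $\E(Y-z)_{+}-\E(z-Y)_{+}=\E Y-z$ and $\E(Y-z)_{+}+\E(z-Y)_{+}=\E\lvert Y-z\rvert$ gives $e_{\frac12+\varepsilon}(Y)=\E Y+2\varepsilon\,\E\lvert Y-e_{\frac12+\varepsilon}(Y)\rvert$; since $\bigl\lvert\E\lvert Y-a\rvert-\E\lvert Y-b\rvert\bigr\rvert\le\lvert a-b\rvert$, this self-improves to
\[
\tilde e_{\delta}(Y)=\E Y+\sqrt{\delta}\,\E\lvert Y-\E Y\rvert+O\!\bigl(\delta\,\E\lvert Y-\E Y\rvert\bigr)\qquad(\delta\downarrow0),
\]
in accordance with~\eqref{eq:13} for the normal law. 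I would apply this with $\delta=\beta(t,x)\,h$; it then remains to expand $\E Y$ to first order in $h$ and $\E\lvert Y-\E Y\rvert$ to first order in $\sqrt h$.

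The second ingredient is the local behaviour of~\eqref{eq:SDE}. With $\Delta W\coloneqq W_{t+h}-W_{t}\sim\mathcal N(0,h)$ and $\Delta\coloneqq X_{t+h}-x$, the Lipschitz hypothesis and It\^o's isometry give $\Delta=\mu(t,x)\,h+\sigma(t,x)\,\Delta W+R_{h}$ with $\E[R_{h}^{2}\mid X_{t}=x]=O(h^{2})$, hence $\E\lvert\Delta\rvert^{p}=O(h^{p/2})$ for every $p$. Taylor's theorem for the smooth~$f$ then yields $Y=\bigl(\partial_{t}f+\mu\,\partial_{x}f+\tfrac12\sigma^{2}\partial_{xx}f\bigr)(t,x)\,h+\sigma(t,x)\,\partial_{x}f(t,x)\,\Delta W+\mathcal E_{h}$, where $\mathcal E_{h}$ collects $\partial_{x}f\cdot R_{h}$, $\tfrac12\partial_{xx}f\cdot\bigl(\Delta^{2}-\E\Delta^{2}\bigr)$, the $h^{2}$ and cross terms, and the Taylor remainder; one checks $\E\mathcal E_{h}=o(h)$ (also immediate from Dynkin's formula applied to~\eqref{eq:SDE}) and $\E\lvert\mathcal E_{h}-\E\mathcal E_{h}\rvert=O(h)$. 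Consequently $\E Y=\bigl(\partial_{t}f+\mu\,\partial_{x}f+\tfrac12\sigma^{2}\partial_{xx}f\bigr)\,h+o(h)$, and since $\bigl\lvert\E\lvert U+V\rvert-\E\lvert U\rvert\bigr\rvert\le\E\lvert V\rvert$ and $\E\lvert\mathcal N(0,h)\rvert=\sqrt{2h/\pi}$,
\[
\E\lvert Y-\E Y\rvert=\bigl\lvert\sigma(t,x)\,\partial_{x}f(t,x)\bigr\rvert\sqrt{\tfrac{2h}{\pi}}+o(\sqrt h).
\]

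Combining the three ingredients, $\tilde e_{\beta(t,x)h}(Y)=\E Y+\sqrt{\beta(t,x)\,h}\,\E\lvert Y-\E Y\rvert+O\!\bigl(\beta(t,x)h\cdot\sqrt h\bigr)=\bigl(\partial_{t}f+\mu\,\partial_{x}f+\tfrac12\sigma^{2}\partial_{xx}f\bigr)\,h+\sqrt{\tfrac{2}{\pi}\beta(t,x)}\,\bigl\lvert\sigma(t,x)\,\partial_{x}f(t,x)\bigr\rvert\,h+o(h)$; dividing by $h$ and letting $h\downarrow0$ gives~\eqref{eq:Risk}. The main obstacle is the bookkeeping of the several remainders across the scales $h,\sqrt h,h^{3/2}$: the drift-type pieces of $\mathcal E_{h}$ are only $O(h)$, not $o(h)$, in $L^{1}$, so a crude Lipschitz bound on the expectile in $L^{1}$ is not sharp enough; what saves the argument is that these pieces enter only through $\E\lvert Y-\E Y\rvert$, where the error $O(h)=o(\sqrt h)$ is still negligible after multiplication by $\sqrt{\beta h}$. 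The appearance of $\sqrt{\beta}$ rather than $\beta$, and of the modulus $\lvert\sigma\,\partial_{x}f\rvert$, is exactly the fingerprint of this mean-absolute-deviation term, and it is why $\mathcal G_{\beta}$ is non-linear. A secondary point is to fix the precise regularity and growth hypotheses on $f$ (e.g.\ $f\in C^{1,2}$ with locally bounded derivatives, together with the moment bounds on the solution) under which the Taylor estimates, the truncation of $\mathcal E_{h}$, and Dynkin's formula are all legitimate.
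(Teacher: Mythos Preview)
Your proposal is correct and follows the same overall architecture as the paper's proof (It\^o expansion of $f(t+h,X_{t+h})-f(t,x)$ combined with a small-$\alpha$ expansion of the expectile), but the key lemma is different. The paper freezes the coefficients $\mu,\sigma$ on $[t,t+\Delta t]$, declares the resulting increment to be \emph{exactly} normal, and then invokes the Gaussian expectile expansion~\eqref{eq:13}; the passage from the true law of the increment to its Gaussian surrogate is treated formally. You instead derive, directly from the first-order condition~\eqref{eq:ExpectileFO}, the distribution-free expansion $\tilde e_{\delta}(Y)=\E Y+\sqrt{\delta}\,\E\lvert Y-\E Y\rvert+O(\delta\,\E\lvert Y-\E Y\rvert)$, and only afterwards identify $\E Y$ (via Dynkin) and $\E\lvert Y-\E Y\rvert$ (via $\E\lvert\mathcal N(0,h)\rvert=\sqrt{2h/\pi}$). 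This buys you a cleaner treatment of the remainder: the non-Gaussian pieces of the increment enter only through $\E\lvert Y-\E Y\rvert$, where their $O(h)$ contribution is $o(\sqrt h)$ and hence negligible after multiplication by $\sqrt{\beta h}$, exactly as you note. The paper's route is shorter and highlights the link to~\eqref{eq:13}; yours is more self-contained and makes the error bookkeeping across the scales $h,\sqrt h,h^{3/2}$ explicit.
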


\begin{proof}
The proof follows \citet[Section~7.3]{Oeksendal2003} (another valuable
reference is \citet{Karatzas1988}). 

Consider the stochastic process $Y_{t}\coloneqq f(t,X_{t})$. From
Ito's rule we deduce that 
\begin{align*}
Y_{t+\Delta t} & =Y_{t}+\int_{t}^{t+\Delta t}\left(\frac{\partial f}{\partial t}(s,X_{s})+\mu(s,X_{s})\frac{\partial f}{\partial x}(s,X_{s})+\frac{1}{2}\sigma(s,X_{s})^{2}\frac{\partial^{2}f}{\partial x^{2}}(s,X_{s})\right)\mathrm{d}s\\
 & \qquad+\int_{t}^{t+\Delta t}\sigma(s,X_{s})\frac{\partial f}{\partial x}(s,X_{s})\mathrm{d}W_{s},
\end{align*}
where the second part is a martingale with increments following the
Wiener process. Following the proof of the Ito formula in \citet[p.~46ff]{Oeksendal2003},
the functions $\mu$ and $\sigma$ are approximated by the constants
$\mu(s,X_{s})\approx\mu(t,X_{t})$ and $\sigma(s,X_{s})\approx\sigma(t,X_{t})$
for $s\in[t,t+\Delta t)$ so that 
\begin{align*}
Y_{t+\Delta t}-Y_{t}= & \left(\frac{\partial f}{\partial t}(t,X_{t})+\mu(t,X_{t})\frac{\partial f}{\partial x}(t,X_{t})+\frac{1}{2}\sigma(t,X_{t})^{2}\frac{\partial^{2}f}{\partial x^{2}}(t,X_{t})\right)\Delta t\\
 & \qquad+\sigma(t,X_{t})\frac{\partial f}{\partial x}(t,X_{t})\cdot\bigl(W_{t+\Delta t}-W_{t}\bigr).
\end{align*}
$Y_{t+\Delta t}-Y_{t}$ is a normally distributed random variable
with mean 
\[
Y_{t}+\left(\frac{\partial f}{\partial t}(t,X_{t})+\mu(t,X_{t})\frac{\partial f}{\partial x}(t,X_{t})+\frac{1}{2}\sigma(t,X_{t})^{2}\frac{\partial^{2}f}{\partial x^{2}}(t,X_{t})\right)\Delta t
\]
and variance
\[
\left(\sigma(t,X_{t})\frac{\partial f}{\partial x}(t,X_{t})\right)^{2}\Delta t.
\]
We deduce from~\eqref{eq:13} that 
\begin{align*}
\tilde{e}_{\beta\cdot\Delta t}^{X_{t}}(Y_{t+\Delta t})-Y_{t} & =\left(\frac{\partial f}{\partial t}(t,X_{t})+\mu(t,X_{t})\frac{\partial f}{\partial x}(t,X_{t})+\frac{1}{2}\sigma(t,X_{t})^{2}\frac{\partial^{2}f}{\partial x^{2}}(t,X_{t})\right)\Delta t\\
 & \qquad+\left|\sigma(t,X_{t})\frac{\partial f}{\partial x}(t,X_{t})\right|\sqrt{\Delta t}\cdot\sqrt{\frac{8}{\pi}}\left(\frac{1+\sqrt{\beta(t,x)\,\Delta t}}{2}-\frac{1}{2}\right).
\end{align*}
Now, by the definition of the risk generator~\eqref{eq:Nested},
we get the assertion.
\end{proof}
\begin{rem}
The drift \eqref{eq:DriftWalk} in Example~\ref{exa:Walk} now turns
out to be a specific case of the general relation revealed by~\eqref{eq:Risk},
both reveal the same pattern: any risk averse assessment adds the
additional drift term 
\[
\sqrt{\frac{2}{\pi}\beta(t,x)}\cdot\left|\sigma(x,t)\cdot\frac{\partial f(t,x)}{\partial x}\right|.
\]
For the absolute value $|\cdot|$ in the expression, the additional
drift term cannot be negative and always points in one direction,
the direction of risk. This is in line with risk aversion, as deviations
in the different directions are associated with profits and (for the
other direction) losses. Further, the coefficient $\beta$ models
the amount of local risk aversion.
\end{rem}

The behavior~\eqref{eq:Risk} has been found with other risk measures
as well, for example for the Entropic Value-at-Risk, cf.~\citet{PichlerSchlotter2022}.
For this reason, various results from the literature extend to the
nested expectile. 

\section{\label{sec:Control}The risk averse control problem}

While the classical theory on dynamic optimization builds on the risk-neutral
expectation (cf.\ \citet{Fleming1993}), we take risk into consideration
to the optimal control problem and derive a risk averse variant of
the Hamilton\textendash Jacobi\textendash Bellman equation. In what
follows we derive the governing equations formally by adapting the
presentation from \citet{PichlerSchlotter2022} for expectiles.

Consider the stochastic differential equation 
\begin{equation}
\mathrm{d}X_{t}^{u}=\mu\bigl(t,X_{t}^{u},u(t,X_{t}^{u})\bigr)\mathrm{d}t+\sigma\bigl(t,X_{t}^{u},u(t,X_{t}^{u})\bigr)\mathrm{d}W_{t}\label{eq:cSDE}
\end{equation}
driven by an adapted control policy $u(t,X_{t})$, where $u$ is a
measurable function. It is the objective to minimize the risk-averse
expectation of the accumulated costs, 
\[
\int_{t}^{T}c\bigl(s,X_{s},u(s,X_{s})\bigr)\mathrm{d}s+\Psi\bigl(X_{T}\bigr),
\]
where $\Psi(\cdot)$ is a terminal cost. Recall that the nested expectiles
accumulate costs and risk so that it is the objective to minimize
the value function 
\[
V^{u}(t,x)\coloneqq\tilde{e}_{\beta(\cdot)}^{\sigma(X)}\left(\left.\int_{t}^{T}\bigl(s,X_{s}^{u},u(s,X_{s}^{u})\bigr)\mathrm{d}s+\Psi\bigl(X_{T}^{u}\bigr)\right|X_{t}^{u}=x\right)
\]
among all policies $u\in\mathcal{U}$ chosen in a suitable set, where
$X_{t}^{u}$ solves the stochastic differential equation~\eqref{eq:cSDE}
for the policy~$u$. 
\begin{prop}
The value function
\[
V(t,x)\coloneqq\inf_{u(\cdot)\in\mathcal{U}}V^{u}(t,x),
\]
solves the differential equation
\begin{equation}
\frac{\partial V}{\partial t}(t,x)=\mathcal{H}_{\beta}\left(t,x,\frac{\partial V}{\partial x},\frac{\partial^{2}V}{\partial x^{2}}\right)\label{eq:HJB}
\end{equation}
with terminal condition $V(T,x)=\Psi(x)$, where 
\begin{equation}
\mathcal{H}_{\beta}(t,x,g,A)\coloneqq\sup_{u\in U}\left\{ -c(t,x,u)-g\cdot\mu(t,x,u)-\frac{1}{2}A\,\sigma(t,x,u)^{2}-\sqrt{\frac{2}{\pi}\beta(t,x)}\cdot\bigl|g\cdot\sigma(t,x,u)\bigr|\right\} \label{eq:Hamilton}
\end{equation}
is the Hamiltonian, cf.\ \citet[Section~IV,~(3.2)]{Fleming1993}. 
\end{prop}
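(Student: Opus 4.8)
The plan is to run the classical dynamic-programming argument behind the Hamilton--Jacobi--Bellman equation (cf.\ \citet[Section~IV]{Fleming1993}), but with the linear infinitesimal generator replaced throughout by the risk generator $\mathcal{G}_{\beta}$ of Proposition~\ref{prop:Risk2}; this follows the treatment of the Entropic Value-at-Risk in \citet{PichlerSchlotter2022}. The first and central step is a dynamic programming principle for $V$. Since the nested expectile of Definition~\ref{def:Nested} is assembled by composing conditional expectiles over consecutive subintervals and the accumulated running cost is additive along the path, splitting the horizon $[t,T]$ at an intermediate time $t+h$ --- together with the tower property of the nesting, the monotonicity of the (conditional) expectile, and its cash-invariance property~\ref{enu:4c} --- yields, formally, for small $h>0$,
\[
V(t,x)=\inf_{u(\cdot)\in\mathcal{U}}\,\tilde{e}_{\beta(t,x)\cdot h}^{\sigma(X)}\!\left(\left.\int_t^{t+h}c\bigl(s,X_s^u,u(s,X_s^u)\bigr)\,\mathrm{d}s+V\bigl(t+h,X_{t+h}^u\bigr)\,\right|X_t^u=x\right),
\]
where the passage from the short-interval nested expectile to a single conditional expectile costs only $o(h)$.

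Next I would expand the right-hand side as $h\to0$. The running cost is $\int_t^{t+h}c(s,X_s^u,u)\,\mathrm{d}s=c(t,x,u)\,h+o(h)$, and the leading term $c(t,x,u)$ is deterministic after conditioning on $X_t^u=x$, hence it leaves the expectile by~\ref{enu:4c} with an $o(h)$ error. Applying Proposition~\ref{prop:Risk2} to the smooth function $V$ along the controlled diffusion~\eqref{eq:cSDE} --- i.e.\ evaluating $\mathcal{G}_{\beta}$ as in~\eqref{eq:Risk} with $\mu,\sigma$ taken at $(t,x,u)$, which we abbreviate $\mathcal{G}_{\beta}^{u}V$ --- gives
\[
\tilde{e}_{\beta(t,x)\cdot h}^{\sigma(X)}\bigl(V(t+h,X_{t+h}^u)\mid X_t^u=x\bigr)-V(t,x)=h\,\mathcal{G}_{\beta}^{u}V(t,x)+o(h).
\]
Substituting both expansions into the dynamic programming principle, cancelling $V(t,x)$, dividing by $h$ and letting $h\to0$ produces
\begin{align*}
0 &=\inf_{u\in U}\Bigl\{c(t,x,u)+\mathcal{G}_{\beta}^{u}V(t,x)\Bigr\}\\
  &=\frac{\partial V}{\partial t}(t,x)+\inf_{u\in U}\Bigl\{c+\mu\,\frac{\partial V}{\partial x}+\tfrac{1}{2}\,\sigma^{2}\,\frac{\partial^{2}V}{\partial x^{2}}+\sqrt{\tfrac{2}{\pi}\,\beta(t,x)}\,\Bigl|\sigma\,\frac{\partial V}{\partial x}\Bigr|\Bigr\},
\end{align*}
where $\partial_{t}V$ is pulled out of the infimum because it does not depend on $u$. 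Writing $\inf_{u}\{\,\cdot\,\}=-\sup_{u}\{-\,\cdot\,\}$ turns this into $\partial_{t}V(t,x)=\mathcal{H}_{\beta}\bigl(t,x,\partial_{x}V,\partial_{xx}V\bigr)$ with $\mathcal{H}_{\beta}$ as in~\eqref{eq:Hamilton}, which is~\eqref{eq:HJB}. The terminal condition is immediate: conditioning on $X_T^u=x$ renders $\Psi(X_T^u)$ the constant $\Psi(x)$, and the expectile of a constant equals that constant, so $V(T,x)=\Psi(x)$.

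The main obstacle is exactly the part that the present derivation treats only formally: a rigorous dynamic programming principle for the nested expectile --- including the well-posedness of the continuous-time limit~\eqref{eq:Nested} along the controlled dynamics, and the measurable-selection and approximation arguments needed to interchange $\inf_{u}$ with the conditioning and with the $h\to0$ limit --- and enough regularity of $V$ to apply Proposition~\ref{prop:Risk2} pointwise; in its absence the statement should be read in the viscosity sense, as for the classical equation. A subordinate technical point is that the $o(h)$ remainders have to be controlled uniformly in $u$, which is where one invokes the Lipschitz bounds on $\mu,\sigma$ postulated for~\eqref{eq:cSDE} together with suitable continuity of $c$ and compactness of the control set~$U$.
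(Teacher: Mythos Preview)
Your proposal is correct and follows essentially the same route as the paper: invoke a dynamic programming principle for the nested expectile, expand the short-horizon expectile using the risk generator of Proposition~\ref{prop:Risk2} together with cash-invariance to split off the running cost, and rearrange $0=\inf_{u}\{c+\mathcal{G}_{\beta}^{u}V\}$ into~\eqref{eq:HJB}. The paper's own derivation is in fact terser and explicitly heuristic (``to accept the assertion\dots''); your version adds the terminal-condition argument and is more candid about the formal gaps (rigorous DPP, regularity of $V$, uniform $o(h)$ control), which the paper does not spell out.
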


To accept the assertion recall that 
\begin{align*}
\frac{1}{h}\tilde{e}_{\beta(\cdot)}^{\sigma(X)} & \left(\left.\int_{t}^{t+h}c\bigl(s,X_{s}^{u},u(s,X_{s}^{u})\bigr)\mathrm{d}s+V(t+h,X_{t+h})-V(t,x)\right|X_{t}^{u}=x\right)\\
 & \xrightarrow[h\to0]{}c(t,x,u)+\mathcal{G}_{\beta}V(t,x)
\end{align*}
by the definition of the risk generator. While the left-hand side
vanishes by the dynamic programming principle for the optimal policy,
it follows for the right-hand side that 
\[
0=\inf_{u\in U}c(t,x,u)+\mathcal{G}_{\beta}V(t,x).
\]
With Proposition~\ref{prop:Risk2}, this leads to the equation~\eqref{eq:HJB}
with Hamiltonian~\eqref{eq:Hamilton}.

\medskip{}

The fundamental equation~\eqref{eq:HJB} is the Hamilton\textendash Jacobi\textendash Bellman
(HJB) partial differential equation. It is essential to observe that
the HJB equation has the additional term 
\[
\sqrt{\frac{2}{\pi}\beta(t,x)}\cdot\left|\sigma(t,x,u)\frac{\partial V}{\partial x}\right|
\]
 involving the gradient; the total gradient in the Hamiltonian~\eqref{eq:HJB}
thus comes with the coefficient 
\[
\mu(t,x,u)+\sqrt{\frac{2}{\pi}\beta(t,x)}\cdot\sigma(t,x,u)\cdot\sign\left(\sigma(t,x,u)\frac{\partial V}{\partial x}\right).
\]
 That is, risk aversion increases the trend~$\mu$ by the amount
$+\sqrt{\frac{2}{\pi}\beta(t,x)}\cdot\sigma(t,x,u)$, while letting
the volatility~$\sigma$ of the process unaffected. 

In typical situations, $\frac{\partial V}{\partial x}$ does not change
its sign. For this reason, the classical theory on viscosity solutions
on existence of solutions of~\eqref{eq:HJB} applies directly, without
modifications. As well, explicit solutions of specific equations are
known. In these situations, the explicit results can be adapted to
the risk averse situation, cf.\ \citet{PichlerSchlotter2020} for
applications from financial mathematics.

\section{\label{sec:Summary}Summary}

This paper exploits the unique properties of expectiles in stochastic
and in dynamic optimization. We start by giving tight comparisons
with common risk measures first. Next, we define the conditional expectile.
The conditional expectile can be nested to extend the scope of risk
functionals (risk measures) to stochastic processes in discrete and
in continuous time. For the random walk process or stochastic processes
driven by a stochastic differential equation, explicit evaluations
of the nested risk functional are available.

The risk generator is defined in analogy to the generator for stochastic
processes. The risk generator involves an additional term which is
caused by risk. With that, the risk generator is a non-linear differential
operator. The aspect of risk augments the Hamiltonian via an additional
term, which is responsible for risk only and the risk averse Hamilton\textendash Jacobi\textendash Bellman
equations thus derive accordingly.\bibliographystyle{abbrvnat}
\bibliography{/home/alopi/Dropbox/Literatur/LiteraturAlois}

\section{Appendix\label{sec:Appendix}}

The function $x\mapsto(1-\alpha)\E(x-X)_{+}-\alpha\E(X-x)_{+}$ has
slope 
\begin{align*}
(1-\alpha)\,P(X\le x)+\alpha\,P(X\ge x) & =\alpha+(1-2\alpha)P(X\le x)\\
 & \ge\alpha P(X\le x)+(1-2\alpha)P(X\le x)\\
 & =(1-\alpha)P(X\le x)\\
 & \ge0
\end{align*}
and is therefore strictly increasing for every $\alpha\in(0,1)$ in
the support of~$X$ so that the expectile is unique. Further, the
slope is so that the function is convex for $\alpha\le\nicefrac{1}{2}$
and concave for $\alpha\ge\nicefrac{1}{2}$.

Denote by $x_{\alpha}$ ($y_{\alpha}$, resp.)\ the expectile for
$X$ ($Y$, resp.), i.e., 
\begin{align*}
\alpha\E(X-x_{\alpha})_{+} & =(1-\alpha)\E(x_{\alpha}-X)_{+}\text{ and}\\
\alpha\E(Y-y_{\alpha})_{+} & =(1-\alpha)\E(y_{\alpha}-Y)_{+}.
\end{align*}
With $x_{+}-(-x)_{+}=x$ we have further
\begin{align*}
\big(\alpha-\frac{1}{2}\big)\E(X-x_{\alpha})_{+} & -\big(\frac{1}{2}-\alpha\Big)\E(x_{\alpha}-X)_{+}=\frac{1}{2}\E(X-x_{\alpha})\text{ and}\\
\Big(\alpha-\frac{1}{2}\Big)\E(Y-y_{\alpha})_{+} & -(\frac{1}{2}-\alpha)\E(y_{\alpha}-Y)_{+}=\frac{1}{2}\E(Y-y_{\alpha}).
\end{align*}
For $\alpha\ge\nicefrac{1}{2}$ we obtain by convexity of the function
$x\mapsto x_{+}$ that 
\begin{align*}
\big(\alpha-\frac{1}{2}\big)\E(X+Y-x_{\alpha}-y_{\alpha})_{+} & \le\big(\alpha-\frac{1}{2}\big)\E(X-x_{\alpha})+\big(\alpha-\frac{1}{2}\big)\E(Y-y_{\alpha})\\
 & =\frac{1}{2}\E(X-x_{\alpha})+\big(\frac{1}{2}-\alpha\Big)\E(x_{\alpha}-X)_{+}+\frac{1}{2}\E(Y-y_{\alpha})+(\frac{1}{2}-\alpha)\E(y_{\alpha}-Y)_{+}\\
 & \le\frac{1}{2}\E(X-x_{\alpha})+\frac{1}{2}\E(Y-y_{\alpha})+\big(\frac{1}{2}-\alpha\Big)\E(x_{\alpha}+y_{\alpha}-X-Y)_{+}.
\end{align*}
It follows that 
\[
\alpha\E(X+Y-x_{\alpha}-y_{\alpha})_{+}\le(1-\alpha)\E(x_{\alpha}+y_{\alpha}-X-Y)_{+}.
\]
The assertion follows by monotonicity again.
\end{document}